\setlist[enumerate]{label =(\arabic*), ref=\arabic*, font=\normalfont}
\renewcommand\paragraph{\@startsection{paragraph}{4}{\z@}
  {1ex \@plus1ex \@minus.2ex}
  {-1em}
  {\normalfont\normalsize\bfseries}}
\numberwithin{equation}{section}
\newtheoremstyle{thmlemcorr}{10pt}{10pt}{\itshape}{}{\bfseries}{.}{10pt}{{\thmname{#1}\thmnumber{ #2}\thmnote{ (#3)}}}
\newtheoremstyle{thmlemcorr*}{10pt}{10pt}{\itshape}{}{\bfseries}{.}\newline{{\thmname{#1}\thmnumber{ #2}\thmnote{ (#3)}}}
\newtheoremstyle{remexample}{10pt}{10pt}{}{}{\bfseries}{.}{10pt}{{\thmname{#1}\thmnumber{ #2}\thmnote{ (#3)}}}
\newtheoremstyle{ass}{10pt}{10pt}{}{}{\bfseries}{.}{10pt}{{\thmname{#1}\thmnumber{ A#2}\thmnote{ (#3)}}}
\theoremstyle{thmlemcorr}
\newtheorem{theorem}{Theorem}
\numberwithin{theorem}{section}
\newtheorem{lemma}[theorem]{Lemma}
\newtheorem{proposition}[theorem]{Proposition}
\theoremstyle{thmlemcorr*}
\newtheorem*{theorem*}{Theorem}
\newtheorem{lemma*}[theorem]{Lemma}
\newtheorem{corollary*}[theorem]{Corollary}
\newtheorem{proposition*}[theorem]{Proposition}
\newtheorem{problem*}[theorem]{Problem}
\newtheorem{conjecture*}[theorem]{Conjecture}
\newtheorem{definition*}[theorem]{Definition}
\newtheorem{assumption*}[theorem]{Assumption}
\theoremstyle{remexample}
\newtheorem{remark}[theorem]{Remark}
\theoremstyle{ass}
\newcommand{\Brm}{\mathrm{B}}
\newcommand{\Crm}{\mathrm{C}}
\newcommand{\Hrm}{\mathrm{H}}
\newcommand{\Lrm}{\mathrm{L}}
\newcommand{\Wrm}{\mathrm{W}}
\newcommand{\Ccal}{\mathcal{C}}
\newcommand{\Fcal}{\mathcal{F}}
\newcommand{\Kcal}{\mathcal{K}}
\newcommand{\Lcal}{\mathcal{L}}
\newcommand{\Ocal}{\mathcal{O}}
\newcommand{\Scal}{\mathcal{S}}
\newcommand{\Ucal}{\mathcal{U}}
\newcommand{\Xcal}{\mathcal{X}}
\newcommand{\Ycal}{\mathcal{Y}}
\newcommand{\bfk}{\mathbf{k}}
\newcommand{\bfe}{\mathbf{e}}
\newcommand{\bfx}{\mathbf{x}}
\newcommand{\bfp}{\mathbf{p}}
\renewcommand{\k}{\mathbf{k}}
\newcommand{\x}{\mathbf{x}}
\newcommand{\Nbb}{\mathbb{N}}
\newcommand{\Rbb}{\mathbb{R}}
\newcommand{\Zbb}{\mathbb{Z}}
\newcommand{\de}{\, \mathrm{d}}
\DeclareMathOperator{\dist}{dist}
\DeclareMathOperator{\ran}{ran}
\newcommand{\ee}{\mathrm{e}}
\newcommand{\ii}{\mathrm{i}}
\newcommand{\set}[2]{\left\{\, #1 \ \ \textup{\textbf{:}}\ \ #2 \,\right\}}
\newcommand{\norm}[1]{\|#1\|}
\newcommand{\normlr}[1]{\left\|#1\right\|}
\newcommand{\abs}[1]{|#1|}
\newcommand{\abslr}[1]{\left|#1\right|}
\newcommand{\sprlr}[1]{\left( #1 \right)}
\newcommand{\dprlr}[1]{\left\langle #1 \right\rangle}
\newcommand{\dd}{\;\mathrm{d}}
\newcommand{\N}{\mathbb{N}}
\newcommand{\R}{\mathbb{R}}
\newcommand{\Z}{\mathbb{Z}}
\newcommand{\loc}{\mathrm{loc}}
\newcommand{\per}{\mathrm{per}}
\DeclareMathOperator{\Id}{Id}
\def\XXint#1#2#3{{\setbox0=\hbox{$#1{#2#3}{\int}$}
		\vcenter{\hbox{$#2#3$}}\kern-.5\wd0}}
\renewcommand{\phi}{\varphi}
\definecolor{tumblue}{RGB}{0, 101, 189}
\begin{document}
	
	\title[]{Pattern formation and film rupture in a two-dimensional thermocapillary thin-film model of the Bénard--Marangoni problem}
    
	\author{Stefano Böhmer}
	\address{\textit{Stefano Böhmer:} Centre for Mathematical Sciences, Lund University, P.O. Box 118, 221 00 Lund, Sweden}
	\email{stefano.bohmer@math.lu.se}
	
	\author{Bastian Hilder}
    \address{\textit{Bastian Hilder:}  Department of Mathematics, Technische Universität München, Boltzmannstraße 3, 85748 Garching b.\ München, Germany}
    \email{bastian.hilder@tum.de}
    
    \author{Jonas Jansen}
    \address{\textit{Jonas Jansen:}   Fraunhofer Institute for Algorithms and Scientific Computing SCAI, Schloss Birlinghoven, 53757 Sankt Augustin, Germany}
    \email{jonas.jansen@scai.fraunhofer.de}
	
	\begin{abstract}
		We study two-dimensional, stationary square and hexagonal patterns in the thermocapillary deformational thin-film model for the fluid height $h$
        \begin{equation*}
    		\partial_t h+
    		\nabla\cdot
    		\sprlr{
    			h^3
    			\sprlr{\nabla\Delta h-g\nabla h}
    			+
    			M\frac{h^2}{(1+h)^2}\nabla h
    		}
    		=0
    		,\quad t>0
    		,\quad x\in\R^2,
	    \end{equation*}
        that can be formally derived from the Bénard–Marangoni problem via a long-wave approximation. Using a linear stability analysis, we show that the flat surface profile corresponding to the pure conduction state destabilises at a critical Marangoni number $M^*$ via a conserved long-wave instability. For any fixed absolute wave number $k_0$, we find that square and hexagonal patterns bifurcate from the flat surface profile at $M=M^* + 4k_0^2$. Using analytic global bifurcation theory, we show that the local bifurcation curves can be extended to global curves of square and hexagonal patterns with constant absolute wave number and mass. 
        We exclude that the global bifurcation curves are closed loops through a global bifurcation in cones argument, which also establishes nodal properties for the solutions.
        Furthermore, assuming that the Marangoni number is uniformly bounded on the bifurcation branch, we prove that solutions exhibit film rupture, that is, their minimal height tends to zero. This assumption is substantiated by numerical experiments.
    \end{abstract}
	\vspace{4pt}
	
	\maketitle
	
    \noindent\textsc{MSC (2020): 
    35B36, 
    70K50, 
    35B32, 
    35Q35, 
    35K59, 
    35K65, 
    35Q79, 
    76A20, 
    35B10, 
    35B35 
    }
    
    \noindent\textsc{Keywords: thermocapillary instability, thin-film model, global bifurcation theory, film rupture, quasilinear degenerate-parabolic equation, stationary solutions, planar patterns, pattern formation}
	
	\section{Introduction}
    \label{sec:intro}
    Convection in thin fluids driven by thermal gradients leads to a rich variety of nonlinear phenomena. One of the first experimental observations was the formation of hexagonal patterns, which was made in 1900 by Henri Bénard \cite{benard1900,benard1901}, who considered a thin viscous fluid placed on top of a heated plane surface. A first attempt at a theoretical explanation was made by Lord Rayleigh \cite{rayleigh1916}, who conjectured that the phenomena are driven by buoyancy effects. However, Block \cite{block1956} and Pearson \cite{pearson1958} noticed that for sufficiently thin films as considered by Bénard, the phenomenon is driven rather by surface tension gradients caused by temperature fluctuations along the surface, called \emph{thermocapillary effects}.
    Note that similar phenomena driven by surface tension gradients have also been noted by Thomson \cite{thomson1882}. 
    
    Later experimental observations in \cite{schatz1995,vanhook1995,vanhook1997} also observed that film rupture and dewetting, where the fluid surface touches the bottom solid with a receding contact line, can occur. Since these phenomena are characterised by a large deformation of the surface, they are typically referred to as deformational instability.

    To understand this deformational behaviour, we consider an asymptotic thin-film model for the height $h = h(t,x,y)$ of a viscous Newtonian fluid resting on a heated impermeable plane, given by
	\begin{equation}
		\label{eq:thin-film}
		\partial_t h+
		\nabla\cdot
		\sprlr{
			h^3
			\sprlr{\nabla\Delta h-g\nabla h}
			+
			M\frac{h^2}{(1+h)^2}\nabla h
		}
		=0
		,\quad t>0
		,\quad x\in\R^2.
	\end{equation}
    Here, $g>0$ is a gravitational constant and $M>0$ is the Marangoni number, which is proportional to the difference between the temperature of the solid and the temperature of the ambient gas.
    This model can be formally derived from the full Boussinesq–Navier–Stokes equations using a long-wave multiscale expansion and lubrication approximation; see \cite{nepomnyashchy2012,nazaripoor2018,bruell2024}. Note that in the derivation, a linear dependency of surface tension on the temperature is assumed.

    The thin-film equation \eqref{eq:thin-film} has recently been studied in one spatial dimension in \cite{bruell2024}. Here, it was found that the pure conduction state $\bar{h} \equiv 1$ of a flat surface undergoes a conserved long-wave instability as $M$ increases. The authors prove a global bifurcation of spatially periodic solutions with an arbitrary fixed period from the pure conduction state using the Marangoni number $M$ as the bifurcation parameter. Along this bifurcation curve, solutions accumulate at a weak, stationary periodic film-rupture solution, where the surface profile vanishes at discrete, periodic points. The analysis in \cite{bruell2024} is based on a planar Hamiltonian structure of the spatial dynamics formulation of the bifurcation problem. Therefore, the analysis is restricted to one spatial dimension, and the two-dimensional case remained open.
    
	In this paper, we extend the global bifurcation results to the two-dimensional case. In our analysis, we study stationary solutions with square or hexagonal periodicity and symmetry. As in the one-dimensional case, we prove that for any arbitrary, fixed period, there is a global bifurcation branch of periodic solutions and that, under an additional assumption, the solutions approach film rupture. However, the existence of weak, stationary film-rupture solutions found in the one-dimensional case remains an open problem, see also \cref{sec:discussion} for a detailed discussion.

    \subsection{Main results of the paper}

    We summarise the main results and techniques of this paper. We prove that

    \begin{itemize}
        \item the pure conduction state $\bar{h} \equiv 1$ undergoes a conserved long-wave instability at the critical Marangoni number $M^* = 4g$;
        \item for every absolute wave number $k_0 > 0$, there is a critical Marangoni number $M^*(k_0) = 4g + 4k_0^2$, where spatially periodic solutions with either $D_4$- or $D_6$-symmetry bifurcate from the pure conduction state, see \cref{thm loc bif squ,thm loc bif hex};
        \item the local bifurcation curves can be extended to global curves using analytic global bifurcation theory, see \cref{thm glob bif};
        \item the global bifurcation curves do not return to the trivial branch, see \cref{prop not loop} and, assuming that $M(s)$ is uniformly bounded along the bifurcation branch, the minimum of $h(s)$ approaches $0$ along the branch, see \cref{prop cond rupture}.
    \end{itemize}
    Additionally, we perform a numerical continuation of the bifurcation branches using \textsc{pde2path} \cite{dohnal}, to confirm the assumption that $M(s)$ is bounded along the bifurcation branch and to investigate secondary bifurcations, see Section \ref{sec:numerics}. 
    The results are illustrated in \cref{fig:num bif diag squ joined,fig:num bif diag hex joined}.

    \begin{figure}[h!]
		\centering
        \includegraphics[width=0.8\textwidth]{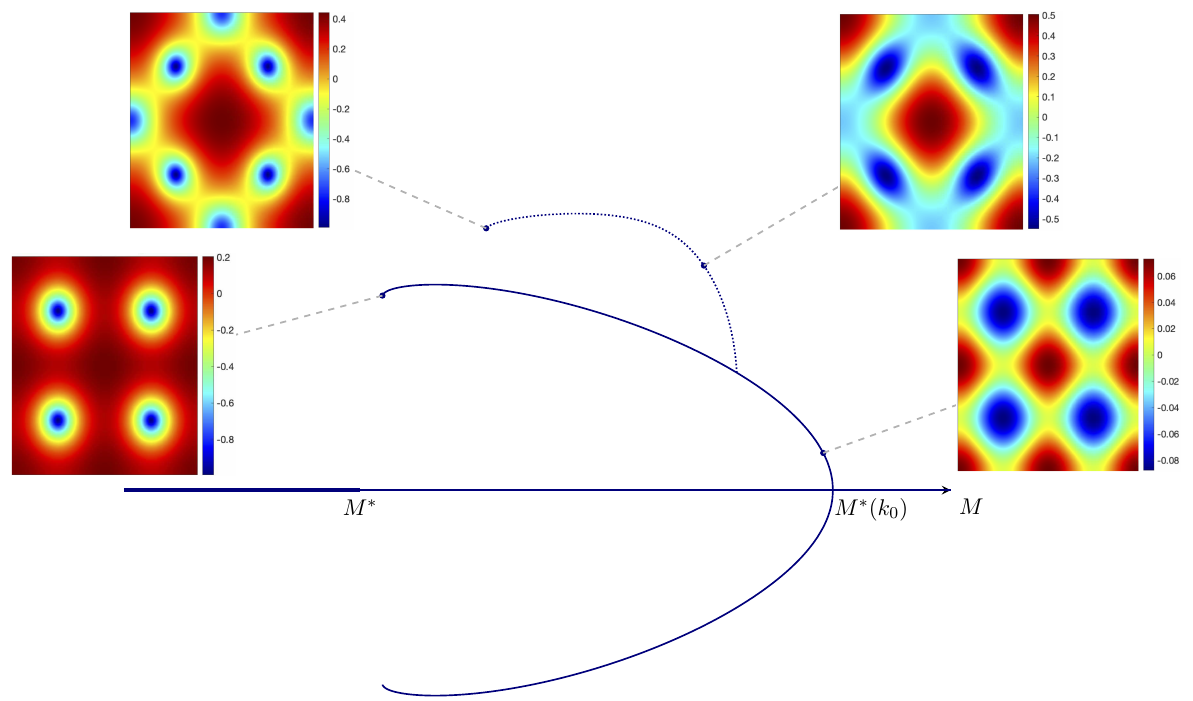}
		\caption{Square patterns bifurcate via a pitchfork bifurcation from the pure conduction state. Furthermore, the numerical experiments suggest the existence of a period-doubling secondary bifurcation. For a detailed description of the numerical continuation, see \cref{sec:numerics}.}
		\label{fig:num bif diag squ joined}
	\end{figure}

    \begin{figure}[h!]
		\centering
        \includegraphics[width=0.8\textwidth]{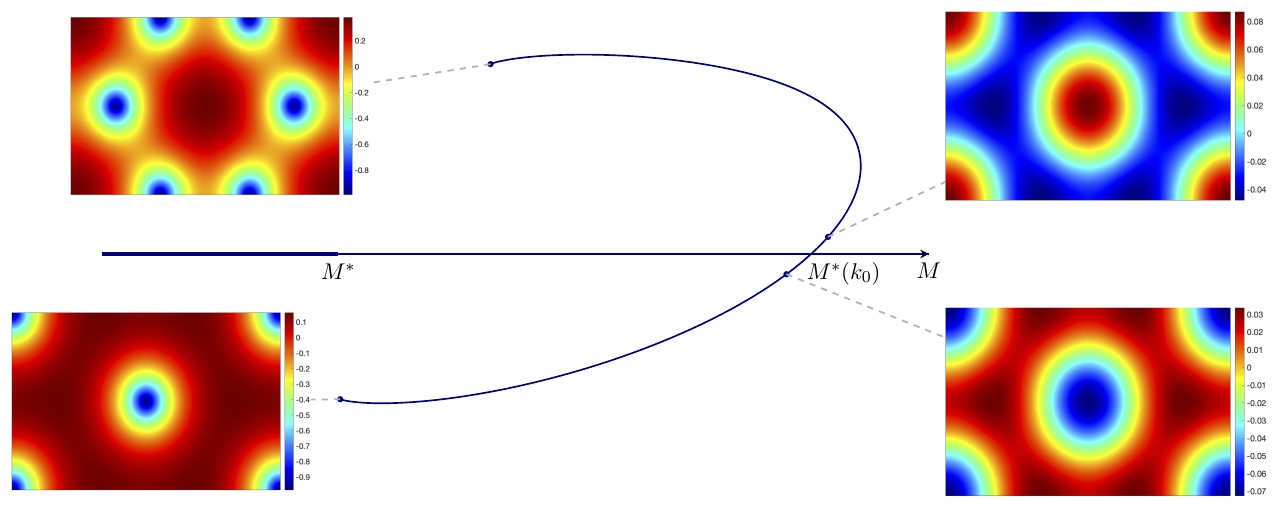}
		\caption{Hexagonal patterns bifurcate transcritically from the pure conduction state. The upper branch consists of up-hexagons and the lower branch of down-hexagons. For a detailed description of the numerical continuation, see \cref{sec:numerics}.}
		\label{fig:num bif diag hex joined}
	\end{figure}

    \paragraph{Global bifurcation analysis}
    The (conserved) long-wave instability of the pure conduction state $\bar{h} \equiv 1$ in \eqref{eq:thin-film}, which is established in \cref{sec: lin stab subsec}, indicates that stationary periodic solutions bifurcate from the pure conduction state $\bar{h}\equiv1$ for $M>M^\ast = 4g$.
    Motivated by numerical observations, cf.~\cite{oron2000}, we are interested in square and hexagonal periodic patterns, which are symmetric with respect to the respective dihedral groups, $D_4$ and $D_6$.
    
    We prove the existence of such solutions in \cref{thm loc bif squ,thm loc bif hex} as follows: first, we fix a square or hexagonal fundamental domain by fixing $k_0>0$, see \cref{fig:pattern_size}.
    \begin{figure}[h!]
	\centering
    \includegraphics[width=0.3\textwidth]{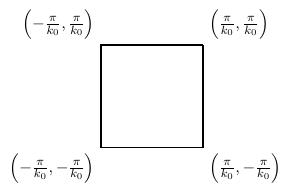}\hspace{2cm}
    \includegraphics[width=0.3\textwidth]{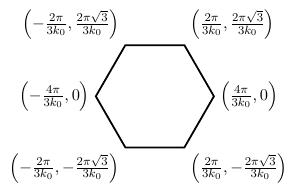}
	\caption{For a fixed absolute wave number $k_0$, we fix a square (left) and hexagonal domain (right). The sizes are chosen such that the dual lattice is generated by wave vectors with length $k_0$.}
	\label{fig:pattern_size}
    \end{figure}
    Thus, we expect solutions which fit into the fixed fundamental domain to bifurcate at $M=M^\ast(nk_0)=4g+4(nk_0)^2$, where $n\in\Nbb_+$.
    We reformulate the existence problem for stationary and periodic solutions to \eqref{eq:thin-film} as a non-local, second-order elliptic problem for $v\equiv h-1$, which reads as
    \begin{equation}\label{eq:elliptic-problem-intro}
        \Delta v - gv + M \left(\dfrac{1}{2+v} + \log\left(\dfrac{1+v}{2+v}\right)\right) =  M K(v),
    \end{equation}
    where $K(v) \in \R$ depends non-locally on $v$ and is given in \cref{eq:K}. The constant $K(v)$ is chosen to guarantee that $v$ has vanishing mean along the bifurcation branch. 
    The elliptic equation \eqref{eq:elliptic-problem-intro} is then formulated as the bifurcation problem \eqref{bif problem} with the Marangoni number $M$ as bifurcation parameter.
    Due to the restriction to periodic solutions with square or hexagonal symmetry, this problem is amenable to the standard local bifurcation theory by Crandall and Rabinowitz about bifurcations from a simple eigenvalue, see \cite{buffoni2003}. 
    
    In \cref{expansions squ,expansions hex}, we compute the expansions of the respective local bifurcation branch $\sprlr{v(s),M(s)}$ about the bifurcation point, which allow us to characterise the types of bifurcation. Similar to roll waves in one dimension, square patterns arise through a subcritical pitchfork bifurcation. 
    In contrast, hexagonal patterns emanate from a transcritical bifurcation, and this qualitative difference is caused by resonance in the hexagonal Fourier lattice, that is, quadratic combinations of critical Fourier modes can again lie at a critical Fourier mode.
    
    We also study some stability properties of the small-amplitude patterns on the local bifurcation branches.
    Using abstract results on spectral theory for perturbed operators by Kato \cite{kato1995}, we prove in \cref{sec:spec-stability} spectral instability with respect to both $\Lrm^2$-localised and subharmonic perturbations.
    In contrast, we obtain spectral stability with respect to superharmonic perturbations.
    Co-periodic perturbations with full $D_4$ or $D_6$ symmetry are covered in \cref{sec:stability-coperiodic} using a centre-manifold analysis. 
    It turns out that square patterns and down-hexagons are nonlinearly unstable with respect to co-periodic, symmetric perturbations, whereas up-hexagons are nonlinearly stable.

    In \cref{thm glob bif}, we then extend the local bifurcation branches of both, square and hexagonal patterns to global ones using the analytic global bifurcation theorem, see \cite{constantin2016a}.
    This additionally provides three alternatives for the qualitative behaviour of the global branch, see \cref{thm glob bif}. The bifurcation curve is a closed loop, or the solutions become unbounded along the bifurcation curve, or there is a subsequence of solutions approaching the boundary of the phase space. As explained below, the last alternative constitutes film rupture.
    
    We investigate these alternatives further to narrow them down. We exclude the alternative that the branch is a closed loop in \cref{prop not loop} using a theorem on global bifurcation in cones, see \cite{buffoni2003}, and the ellipticity of \eqref{bif problem}.
    A direct consequence of the proof is that we establish a nodal property, as we prove that extrema are invariant along the bifurcation branch, see \cref{remark nodal}.
    Next, regarding the norm blow-up alternative, in \Cref{prop no derivative blowup}, we exclude a mere blow-up of derivatives along the bifurcation branch by using a uniform regularity estimate. That is, solutions along the bifurcation branch cannot develop singular profiles containing kinks or corners unless they approach the boundary.
    Finally, we prove a conditional result on the occurrence of the third alternative, which corresponds to film rupture, as we discuss in the following.
    
    \paragraph{Conditional film rupture}
    We are able to prove the following conditional result on film rupture of the square- or hexagon-periodic solutions to the thin-film equation \eqref{eq:thin-film} in \cref{prop cond rupture}: assuming that the Marangoni number $M(s)$ is uniformly bounded on the global bifurcation branch, there is a subsequence of solutions $v(s_j)$ on the branch exhibiting film rupture in the sense that $\min_\Omega v(s_j)\to -1$ as $j\to\infty$.
    Recall that the film height is retrieved via $h\equiv1+v$.
    
    This conditional result is justified by two observations:
    first, for the one-dimensional problem, a bound on $M(s)$ is established in \cite{bruell2024} using a reformulation of the bifurcation problem as a planar Hamiltonian system.
    Second, we analyse the two-dimensional case numerically in \cref{sec:numerics} and all experiments indicate that, indeed, $M(s)$ admits a uniform bound.
    
    We refer to \cref{sec:discussion} for a more detailed discussion of the lack of a rigorous bound on $M(s)$.
    There, we also discuss that even upon assuming that such a bound exists, convergence to a weak film-rupture solution, as established in \cite{bruell2024} in the one-dimensional case, remains an open problem due to worse Sobolev embeddings in two dimensions.
    
    \paragraph{Numerical analysis}
    We complement our investigation of the global bifurcation of square and hexagonal patterns with a numerical analysis of the bifurcation problem \eqref{bif problem}.
    Using the \textsc{pde2path} library for \textsc{Matlab}, see \cite{dohnal,uecker2014,uecker2021}, we perform a numerical continuation of branches of solutions to the bifurcation problem \eqref{bif problem}, see \cref{sec:numerics}.
    Our numerical results match our analytical results in \cref{thm loc bif squ,thm loc bif hex,thm glob bif}. In particular, we detect bifurcation branches of square and hexagon solutions.
    The numerical continuation of the branch of square patterns is of subcritical pitchfork-type, whereas in the case of hexagon patterns, the numerical continuation branch is transcritical.
    Moreover, the numerical solutions feature the nodal property along the branch, see \cref{remark nodal}.
    
    All numerical continuations terminate close to film rupture, in the sense that the minimum of the numerical solution is close to, but still larger than $-1$.
    As noted above, the bifurcation parameter $M(s)$ seems to admit a uniform bound, as it eventually decreases in all cases.
    
    It is worth mentioning that numerically, we also detect secondary bifurcations in the case of square patterns.
    The solutions on the secondary bifurcation branches feature a period that is an integer multiple of the period of solutions on the primary branch. 
    
    \subsection{Related results}

    In this paper, we study film rupture and dewetting caused by thermocapillary effects. Besides experimental observations in \cite{vanhook1997}, these phenomena were also studied numerically, see \cite{nazaripoor2018} for Newtonian thin films and \cite{mohammadtabar2022} for non-Newtonian power-law thin films. Film rupture and dewetting have also been studied in thin-film models, where the dynamics are driven by Van der Waals forces instead of thermal gradients. We refer to \cite{sharma1998,sharma1999} for numerical results on two-dimensional pattern formation and film rupture and \cite{witelski2000,vaynblat2001} for an analysis of line and radially symmetric solutions using self-similar dynamics, as well as the recent review paper \cite{witelski2020} on one-dimensional dynamics. Although the physical driving forces are different, the observed effects seem similar. In particular, in both settings, the flat surface destabilises via a conserved long-wave instability. We also refer to \cite{shklyaev2017a} for a recent overview of long-wave instabilities in fluids.

    Experimental studies similar to Bénard's \cite{benard1900,benard1901} observe a competition between regular surface patterns and film rupture, see e.g.~\cite{vanhook1997}. While both phenomena have been observed in experiments, the thin film model \eqref{eq:thin-film}, which was derived for perfect heat conduction at the fluid-solid interface \cite{oron1997a}, seems to capture film rupture but no observable small amplitude pattern close to the flat surface. 
    However, under the assumption of low thermal conductivity on the fluid-solid interface, Shklyaev et.~al.~\cite{shklyaev2012} derived a different asymptotic model from the Bénard–Marangoni problem and showed that this model exhibits a conserved short-wave instability which typically leads to the bifurcation of small-amplitude patterns. This observation was recently made rigorous by the second and third author in \cite{hilder2024a}, where they establish a rigorous description of small-amplitude square and hexagonal patterns bifurcating from the flat surface as well as the spatial transition between different patterns.

    As discussed above, we use a global bifurcation approach to study large-amplitude steady solutions to the thin-film equation \eqref{eq:thin-film}. Notably, \cref{eq:thin-film} can be rewritten as
    \begin{equation}
        \partial_t h + \nabla \cdot \left(h^3 \nabla\left(\Delta h - g h + M \left(\dfrac{1}{1+h} + \log\left(\dfrac{h}{1+h}\right)\right)\right)\right)=0,
    \end{equation}
    which is a generalised Cahn–Hilliard equation, see also \cite{shklyaev2017a}. A global bifurcation result for the standard two-dimensional Cahn–Hilliard equation with a double-well potential has been obtained in \cite{kielhofer1997}. The nodal structure of the solutions can be established similarly to our result, see \cref{remark nodal} and \cite{healey1991}. In addition, a detailed global structure of the bifurcation curves can be established. However, we point out that the proof of the global properties found in \cite{kielhofer1997} relies on the potential having a double-well structure. We also refer to \cite{maier-paape1997} for similar results on global bifurcation for the Cahn–Hilliard equation with triangular symmetry. This paper also generalises previous results obtained in \cite{fife1997} on the robustness of nodal structures on the local bifurcation branch with respect to perturbations of the underlying equation. Finally, we mention results on spinodal decomposition in the Cahn–Hilliard equation \cite{maier-paape1998,maier-paape2000}, which refers to the observation that solutions of the Cahn–Hilliard equation first form a spatial structure with finite wavelength. This is followed by a coarsening phase, where spatial structures with a longer wavelength emerge. Notably, this phenomenon can also be observed in thermocapillary thin films, \cite{bestehorn2003}, as well as thin films on hydrophobic surfaces \cite{witelski2020}, see also \cite{bestehorn2009}.

    In the one-dimensional case \cite{bruell2024}, the Sivashinsky equation \cite{sivashinsky1983} has been derived as an amplitude equation capturing the dynamics close to the onset of the conserved long-wave instability at $M = M^*$ using a multiscale expansion. It turns out that the same computation also yields the Sivashinsky equation
    \begin{equation*}
        \partial_t V  = - \Delta V  - \Delta^2 V -2g \nabla\cdot(V\nabla V)
    \end{equation*}
    as an amplitude equation in the two-dimensional case, see e.g.~\cite{shklyaev2017a}. Note that this is a Cahn–Hilliard equation with quadratic nonlinearity, and thus has a cubic potential which is not bounded from below. Therefore, it is expected that finite-time blow-up occurs \cite{shklyaev2017a} for an open class of initial conditions. For a related equation, this was confirmed in \cite{bernoff1995} for periodic initial conditions with sufficiently long wavelength.
        
    \subsection{Outline of the paper}
    The paper is structured as follows: 
    In the first part of \cref{sec:setup}, we carry out a linear stability analysis of the fourth-order equation \labelcref{eq:thin-film}, which features a long-wave instability for $M>M^\ast$.
    Thereafter, we introduce the notions of $\Omega$-symmetry and $\Omega$-periodicity for a fundamental domain $\Omega$, and we introduce the respective function spaces.
    Lastly, we reduce the stationary problem to second order.
    
    \cref{sec loc bif} contains the local bifurcation analysis: First, we prove the existence of local branches of square patterns and compute a local expansion of the branch at the bifurcation point.
    Thereafter, we cover local branches of hexagonal patterns.
    Lastly, we analyse the stability of small-amplitude patterns in both cases.

    In \cref{sec glob bif}, the local bifurcation branches are continued to global ones, and we proceed with a qualitative analysis of these branches. In particular, this includes the nodal properties and our conditional result on film rupture.

    \cref{sec:numerics} is devoted to a numerical analysis of the bifurcation problem \eqref{bif problem}. 
    This includes a numerical justification of the conditional result.

    Lastly, in \cref{sec:discussion} we discuss open problems.
	
	\section{Linear stability analysis, stationary problem, and setup}
    \label{sec:setup}
    
    The goal of this section is threefold: First, we perform a linear stability analysis of the pure conduction state in \cref{eq:thin-film} and identify a critical Marangoni number $M^* > 0$, where a long-wave instability occurs. Afterwards, we introduce the notion of periodic and symmetric functions with respect to square and hexagonal domains.
    Finally, we reduce the stationary problem to a nonlocal second-order elliptic problem.

    \subsection{Linear stability analysis}
    \label{sec: lin stab subsec}
	We begin with a linear spectral analysis.
    \Cref{eq:thin-film} can be written as $h_t-\Fcal_M(h)=0$, where $\Fcal_M$ is a nonlinear degenerate-elliptic partial differential operator in space.
    The linearisation of $\Fcal_M$ about the pure conduction state $\Bar{h}=1$ is
	\begin{equation}
		\label{linearised operator}
		\Lcal_M v=
		-\Delta^2v+\sprlr{g-\frac{M}{4}}\Delta v.
	\end{equation}
	Inserting the ansatz $v(t,\bfx)=\ee^{\lambda(\bfk)t-\ii\bfk\cdot\bfx}$, we compute the \emph{dispersion relation}
	\begin{equation}
        \label{disp rel}
		\lambda(\bfk)=\hat{\Lcal}_M(\bfk)=-\abs{\bfk}^4-\sprlr{g-\frac{M}{4}}\abs{\bfk}^2
		,\quad \bfk=(k_1,k_2)\in\Zbb^2,
	\end{equation}
    which is illustrated in \cref{fig:instability}.
	The dispersion relation is radially symmetric due to the rotational invariance of \Cref{eq:thin-film}. Additionally, as long as $M \leq 4g$, the dispersion relation is non-positive and $\k = 0$ is its only root, and we define $M^\ast\coloneqq 4g$. If $M = M^*(k_0) := M^* + 4k_0^2$, the dispersion relation satisfies $\lambda_{M^*(k_0)}(\bfk)>0$ if and only if $|\k| \in (0,k_0)$. Therefore, the pure conduction state undergoes a conserved long-wave instability, or sideband instability, at $M = M^*$. In particular, the pure conduction state is linearly unstable with respect to periodic perturbations with wave number $|\k| \in (0,k_0)$. This indicates the bifurcation of periodic solutions for Marangoni numbers $M>M^*$.

    \begin{figure}[H]
    \centering
    \includegraphics[width=0.5\textwidth]{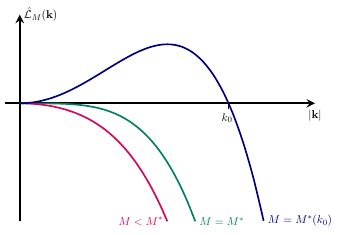}
    \caption{The dispersion relation $\hat{\Lcal}_M(\k)$ for the different parameter regimes: for $M < M^*$, the pure conduction state is stable. At $M=M^*$ the pure conduction state undergoes a conserved long-wave instability. For $M^*(k_0) = M^* + 4k_0^2$, the pure conduction state is unstable with respect to periodic perturbations with absolute wave number $|\k| \in (0,k_0)$.}
    \label{fig:instability}
\end{figure}

    \subsection{$\Omega$-periodicity and $\Omega$-symmetry}
    Motivated by the linear stability analysis, we are interested in stationary solutions to \Cref{eq:thin-film}, which are periodic and symmetric with respect to a square or hexagonal Fourier lattice. Therefore, we introduce the following notation. 

    A function $h$ is $\square$-periodic with absolute wave number $k_0 > 0$ if
    \begin{equation}\label{eq:periodic}
        h(\x) = h(\x + \bfp_1) = h(\x + \bfp_2)
    \end{equation}
    for the vectors
    \begin{equation*}
        \bfp_1 = \frac{2\pi}{k_0} \bfe_1 \qquad \text{and} \qquad \bfp_2 = \frac{2\pi}{k_0} \bfe_2,
    \end{equation*}
    where $\bfe_1$ and $\bfe_2$ are the unit vectors in $x$- and $y$-direction, respectively.
    Similarly, a function $h$ is $\hexagon$-periodic with absolute wave number $k_0 > 0$ if \eqref{eq:periodic} holds with
    \begin{equation*}
        \bfp_1 = \frac{4\pi}{\sqrt{3}k_0} \bfe_2 \qquad \text{and} \qquad \bfp_2 = \frac{2\pi}{\sqrt{3}k_0}\begin{pmatrix}
			\sqrt{3}\\
			-1
		\end{pmatrix}.
    \end{equation*}
    
    Such functions can be represented as a Fourier series on the Fourier lattice. We will denote by $\bfk_1,\bfk_2\in\R^2$ two distinct vectors with $\abs{\bfk_1}=\abs{\bfk_2}=k_0$ and $\bfk_1\neq\bfk_2$.
    They generate the Fourier lattice
    \begin{equation*}
        \Gamma \coloneqq
        \set{n_1\bfk_1+n_2\bfk_2}{n_1,n_2\in\Z}.
    \end{equation*}
    The Fourier lattice for square patterns is generated by the wave vectors
    \begin{equation*}
        \bfk_1=k_0 \bfe_1
        \qquad\text{and}\qquad
        \bfk_2=k_0 \bfe_2
    \end{equation*}
    and a $\square$-periodic function can be written as
    \begin{equation*}
        \sum_{\boldsymbol{\gamma}\in\Gamma} 
		a_{\boldsymbol{\gamma}}
		\ee^{\ii \boldsymbol{\gamma}\cdot\bfx}
        =
        \sum_{n_1,n_2\in\Zbb} 
		\tilde{a}_{n_1,n_2}
		\ee^{\ii k_0\left(n_1x+n_2y\right)},\quad\bfx\in\R^2.
    \end{equation*}
    The Fourier lattice for hexagonal patterns is generated by
    \begin{equation*}
		\bfk_1=
		k_0 \bfe_1,
        \qquad\text{and}\qquad
		\bfk_2= \frac{k_0}{2}
		\begin{pmatrix}
			-1\\
		    \sqrt{3}
		\end{pmatrix}
	\end{equation*}
    and a $\hexagon$-periodic function can be written as
    \begin{equation*}
        \sum_{\boldsymbol{\gamma}\in\Gamma} 
		a_{\boldsymbol{\gamma}}
		\ee^{\ii \boldsymbol{\gamma}\cdot\bfx}
        =
        \sum_{n_1,n_2\in\Zbb} 
		\tilde{a}_{n_1,n_2}
		\ee^{\ii k_0\left(
			n_1x-\frac{n_2}{2}x+\frac{\sqrt3 n_2}{2}y
			\right)}
        ,\quad\bfx\in\R^2.
    \end{equation*}
    Note that, by abuse of notation, we also denote by $\square$ and $\hexagon$ the corresponding fundamental domains of periodicity, see \Cref{fig:hexcorners}.

    Since \Cref{eq:thin-film} is rotationally invariant, we are only interested in solutions on the square and hexagonal lattice that are invariant under the action of the dihedral groups $D_4$ and $D_6$, respectively.
    These are the symmetries that leave the corresponding Fourier lattices invariant. We will call these functions $\square$-symmetric and $\hexagon$-symmetric, respectively. These symmetry conditions can also be imposed as conditions on the Fourier coefficients. That is, a real-valued function is $\Omega$-symmetric for $\Omega\in \{\square,\hexagon\}$ if and only if the Fourier coefficients satisfy
    \begin{equation}\label{eq:cond-symmetry}
        a_{\boldsymbol{\gamma}}=\overline{a_{-\boldsymbol{\gamma}}},\quad\text{and}\quad a_{\boldsymbol{\gamma}}=a_{S\boldsymbol{\gamma}}\quad\text{for all}\quad S\in
    \begin{cases*}
	   D_4 & if $\Omega=\square$,\\
	   D_6 & if $\Omega=\hexagon$.
	\end{cases*}
    \end{equation}
    Note that in both cases the coefficients are necessarily real. Furthermore, the Fourier series can be represented in cosine form, which is also a consequence of symmetry.

    For later convenience, we also introduce the sets of zeros of $\hat{\Lcal}_{M^\ast(k_0)}$ which lie on the respective Fourier lattice
    \begin{equation}\label{eq:def-Lambda}
        \Lambda_\square\coloneqq
        \left\{\pm\bfk_1,\pm\bfk_2\right\}
        \qquad\text{and}\qquad
        \Lambda_\hexagon\coloneqq
        \left\{\pm\bfk_1,\pm\bfk_2,\pm(\bfk_1+\bfk_2)\right\}.
    \end{equation}
    These are exactly the Fourier modes which destabilise at $M = M^*(k_0)$. 

    Lastly, there is a consequential qualitative difference between the square and the hexagonal Fourier lattice:
    in the hexagonal setup, we can combine $\bfk_1$ and $\bfk_2$ nontrivially to find a third Fourier vector of the same length
    \begin{equation*}
        \bfk_3=-\sprlr{\bfk_1+\bfk_2}\quad\text{and}\quad\abslr{\bfk_3}=k_0.
    \end{equation*}
    This makes the hexagonal Fourier lattice resonant, and in the literature $\bfk_1,\bfk_2,\bfk_3$ is referred to as a resonant triad \cite{uecker2021}.

     \begin{figure}[h!]
	\centering
    \includegraphics[width=0.4\textwidth]{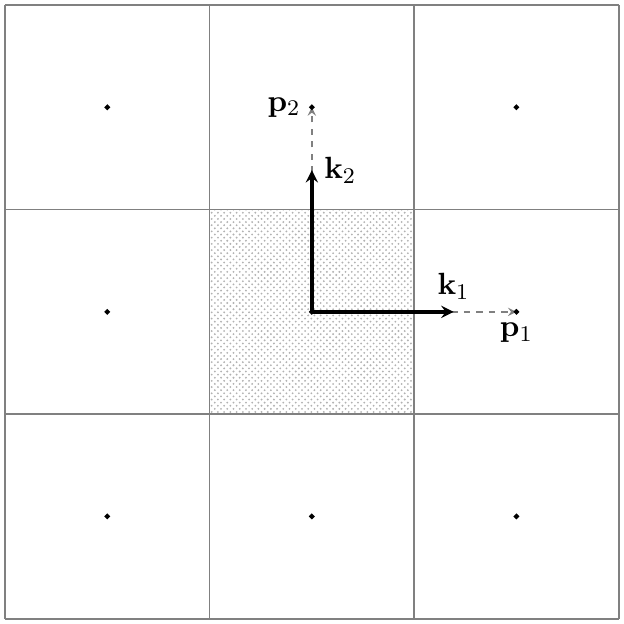}\hspace{2cm}
    \includegraphics[width=0.4\textwidth]{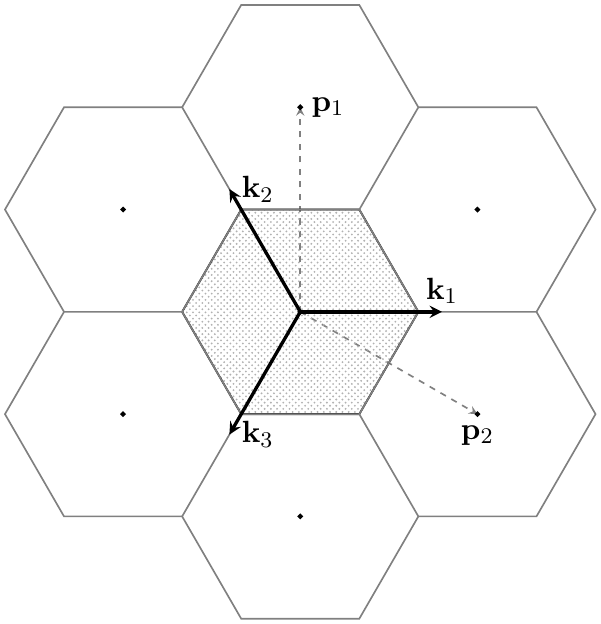}
	\caption{Lattices for square (left) and hexagonal (right) patterns. Black arrows indicate the generators of the Fourier lattices; grey lines show the directions of pattern periodicity. The grey dotted regions represent the fundamental domains of periodicity.}
	\label{fig:hexcorners}
    \end{figure}

    \subsection{Stationary solutions}

    In the remainder of the paper, we will be concerned with $\Omega$-symmetric and -periodic stationary solutions bifurcating from the pure conduction state $\Bar{h}=1$. To this end, we notice that we can rewrite the stationary thin-film equation 
	\begin{equation}
		\label{eq:stationary-thin-film}
		\nabla\cdot
		\sprlr{
			h^3
			\sprlr{\nabla\Delta h-g\nabla h}
			+
			M\frac{h^2}{(1+h)^2}\nabla h
		}
		=0.
	\end{equation}
	as a second-order elliptic problem with a carefully chosen nonlocal right-hand side. We write $h=1+v$, where $v>-1$, yielding
	\begin{equation*}
		\nabla\cdot
		\sprlr{
			(1+v)^3
			\sprlr{\nabla\Delta v-g\nabla v
				+
				M\frac{1}{(1+v)(2+v)^2}\nabla v
		}}
		=0.
	\end{equation*}
	Using that
	\begin{equation*}
		\frac{1}{(1+v)(2+v)^2}\nabla v
		=
		\nabla \sprlr{\frac{1}{2+v}+\log\sprlr{\frac{1+v}{2+v}}}
	\end{equation*}
	we find that 
	\begin{equation*}
		\nabla\cdot
		\sprlr{
			(1+v)^3
			\nabla\sprlr{\Delta v-gv
				+
				M
				\sprlr{\frac{1}{2+v}+\log\sprlr{\frac{1+v}{2+v}}
		}}}
		=0.
	\end{equation*}
	This problem is of the form $\nabla\cdot\sprlr{a(v)\nabla G(v)}=0$. 
	If $v>-1$, the function $G(v)$ is well-defined and $\Omega$-periodic, and $a(v)>0$ uniformly in $\Omega$.
	By uniqueness for this elliptic Neumann problem
	\begin{equation*}
		\nabla\cdot\sprlr{a(v)\nabla G(v)}=0
		\quad\iff\quad
		G(v)\equiv \tilde{K},\;\tilde{K}\in\Rbb.
	\end{equation*}
	Hence, we can reformulate the stationary thin-film equation \eqref{eq:stationary-thin-film} as
	\begin{equation}
		\label{eq:integrated-stationary-problem}
		\Delta v-gv
		+
		M
		\sprlr{\frac{1}{2+v}+\log\sprlr{\frac{1+v}{2+v}}}
		=MK(v),
	\end{equation}
	where $MK(v)$ is the integration constant depending on $v$ and $M>0$.
	We choose
	\begin{equation}
		\label{eq:K}
		K(v)=
		\fint_\Omega\frac{1}{2+v}+\log\sprlr{\frac{1+v}{2+v}}\de \bfx,
	\end{equation}
	which ensures that \Cref{eq:integrated-stationary-problem} has nontrivial solutions with vanishing mean.

	\section{Local bifurcation theory}
    \label{sec loc bif}
    In this section, we establish the bifurcation of periodic patterns from the pure conduction state.
    The latter destabilises at $M=M^\ast$ with respect to long-wave perturbations, and we choose $M$ as bifurcation parameter.
    Having set the size of the domain $\Omega\in\{\square,\hexagon\}$ by fixing $k_0$, we now determine the bifurcation point $M^*(k_0)$ at which patterns with the same size bifurcate.
    Consider 
	\begin{equation*}
		M^\ast(k_0)=M^\ast+4 k_0^2=4g+4 k_0^2
	\end{equation*}
	such that for $M=M^\ast(k_0)$ the dispersion relation \eqref{disp rel} vanishes at $\abs{\bfk}=k_0$
	\begin{equation*}
		\lambda_{M=M^\ast(k_0)}(\bfk)=0, \quad\text{if}\;\abs{\bfk}=k_0.
	\end{equation*}
	Further, it holds $\partial_M \lambda_{M}(\k)|_{M=M^*(k_0)} > 0$ for $|\k| = k_0$. Hence, for the choice $M = M^*(k_0)$, we expect the bifurcation of an $\Omega$-periodic solution from the pure conduction state.

    \begin{remark}
    \label{rema other bifurcation points}
        We note that at $(v,M)=\sprlr{0,M^\ast(nk_0)}$, where $n\in\N_+$, we expect the bifurcation of periodic solutions such that $\Omega$ fits $n$ periods per dimension.
    \end{remark}
    
	We now set up the corresponding bifurcation problem. First, we introduce the following function spaces
	\begin{align*}
		\Xcal&\coloneqq
		\set{v\in\Hrm_\per^2(\Omega)}
		{v\;\textrm{is $\Omega$-symmetric,}\;
			\fint_\Omega v \de x=0},\\
		\Ycal&\coloneqq
		\set{v\in\Lrm_\per^2(\Omega)}
		{v\;\textrm{is $\Omega$-symmetric,}\;
			\fint_\Omega v \de x=0},\\
		\Ucal&\coloneqq
		\set{v\in\Xcal}
		{v>-1},
	\end{align*}
	where $\Hrm^k_\per(\Omega)$ is the Sobolev space of $\Omega$-periodic functions and $\Xcal$ and $\Ycal$ carry the $\Hrm_\per^2(\Omega)$- and the $\Lrm_\per^2(\Omega)$-norm, respectively.
	With this notation, we can define the analytic bifurcation function
	\begin{align*}
		F\colon\Ucal\times\Rbb\longrightarrow\Ycal \; \colon \; F(v,M)=
		\Delta v-gv
		+
		M\sprlr{\frac{1}{2+v}+\log\sprlr{\frac{1+v}{2+v}}}
		-MK(v),
	\end{align*}
	where the constant $K(v)$ is defined in \eqref{eq:K}.
	Note that the Laplacian is invariant under the $\Omega$-symmetries and our choice of $K(v)$ ensures that $F(v,M)$ has vanishing mean for any $(v,M) \in \Ucal\times\R$.
	The bifurcation problem therefore is 
	\begin{equation}
		\label{bif problem}
		F(v,M)=0.
	\end{equation}

    In the following, we establish local bifurcation results for the square and the hexagonal lattice, respectively. 
    We treat both cases separately, since the resonance in the hexagonal lattice makes the quadratic interactions of critical Fourier modes relevant.
    As a consequence, the bifurcation of hexagonal patterns is generally transcritical, while square patterns arise from a pitchfork bifurcation.

    \subsection{Local bifurcation of square patterns}    
	Assuming $\square$-symmetry, it turns out that a bifurcation from a one-dimensional kernel occurs and the Crandall--Rabinowitz local bifurcation theorem in \cite[Thm. 8.3.1]{buffoni2003} applies.
    We obtain the following result.
	\begin{theorem}[Local bifurcation on the square lattice]
		\label{thm loc bif squ}
		Fix $k_0>0$ and let $\Omega=\square$. Then a subcritical pitchfork bifurcation occurs at $(v(0),M(0)) = (0,M^*(k_0))=(0,M^\ast+4 k_0^2)$, that is, there exists $\varepsilon>0$ and a branch of solutions 
		\begin{equation*}
			\Ccal_\loc\coloneqq\set{\sprlr{v(s),M(s)}}{s\in(-\varepsilon,\varepsilon)}
			\subset \Ucal\times\Rbb
		\end{equation*}
		to the bifurcation problem \eqref{bif problem}
		with expansions
		\begin{equation}
			\label{expansions squ}
			\begin{split}
				v(s) & = s\hat{\xi}_0+\tau(s),
				\\
				M(s) & = M^\ast(k_0)-\frac{\sprlr{g+k_0^2}\sprlr{104+203k_0^2}}
            {24\pi^2\sprlr{1+k_0^2+k_0^4}}
				s^2+\Ocal(\abs{s}^3),
			\end{split}
		\end{equation}
		where $\tau(s)=\Ocal(\abs{s}^2)$ in $\Xcal$ as $s\to0$, and $\hat{\xi}_0$ is the normalised kernel element defined in \eqref{kernel element squ}.
	\end{theorem}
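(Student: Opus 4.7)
The plan is to apply the Crandall--Rabinowitz bifurcation-from-a-simple-eigenvalue theorem \cite[Thm.~8.3.1]{buffoni2003} to $F\colon\Ucal\times\R\to\Ycal$ at $(0,M^\ast(k_0))$, and then to compute the low-order Taylor coefficients of the resulting local branch by matching powers of $s$ in $F(v(s),M(s))=0$.

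For the first step, a direct computation shows that on the zero-mean space $\Xcal$ the linearisation $L:=D_vF(0,M^\ast(k_0))$ reduces to $\Delta+k_0^2$, which is self-adjoint on $\Ycal$ with compact resolvent. Among the wave vectors in $\Gamma$ of length $k_0$, only those in $\Lambda_\square=\{\pm\bfk_1,\pm\bfk_2\}$ contribute, and the unique (up to scaling) real, zero-mean, $D_4$-symmetric combination of the corresponding exponentials is $\cos(k_0x)+\cos(k_0y)$. Hence $\ker L$ is one-dimensional, spanned by $\hat\xi_0$; self-adjointness gives $\mathrm{codim}\,\ran L=1$; and transversality follows from $D_MD_vF(0,M^\ast(k_0))\hat\xi_0=\tfrac14\hat\xi_0\notin\ran L$. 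Since $F$ is analytic on $\Ucal\times\R$, the theorem yields $\varepsilon>0$ and the analytic local branch $(v(s),M(s))$ with $v(s)=s\hat\xi_0+\tau(s)$ and $\tau(s)=\Ocal(|s|^2)$ in a complement of $\ker L$.

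For the expansion, I insert $v(s)=s\hat\xi_0+s^2v_2+s^3v_3+\ldots$ and $M(s)=M^\ast(k_0)+sM_1+s^2M_2+\ldots$ into $F(v(s),M(s))=0$ and use the Taylor coefficients $N'(0)=1/4$, $N''(0)=-1/2$, $N'''(0)=11/8$ of $N(v):=1/(2+v)+\log((1+v)/(2+v))$. At order $s^2$, the product $\hat\xi_0^2$ only generates wave vectors of lengths $0$, $\sqrt 2\, k_0$ and $2k_0$, so it has no component along $\ker L$; the solvability condition therefore forces $M_1=0$, reflecting the $s\mapsto -s$ symmetry and giving the pitchfork character. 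One then inverts $L$ on the non-critical modes (with eigenvalues $-k_0^2$ and $-3k_0^2$) to obtain an explicit $v_2$ of the form $\alpha(\cos(2k_0x)+\cos(2k_0y))+\beta(\cos(k_0(x+y))+\cos(k_0(x-y)))$. At order $s^3$, projecting the resulting equation onto $\hat\xi_0$ produces a scalar equation determining $M_2$ from the sum of the quadratic-via-$v_2$ contribution (scaled by $N''(0)$) and the cubic self-interaction (scaled by $N'''(0)$).

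The conceptual content---Fredholm kernel, transversality, $\Z_2$-forced vanishing of $M_1$---is essentially routine. The real work lies in the algebraic bookkeeping of the order-$s^3$ coefficient: one must combine the two contributions to $v_2$ carrying different denominators, add the $N'''(0)$ cubic self-interaction, and keep precise track of the normalisation of $\hat\xi_0$ together with the factor $|\square|=4\pi^2/k_0^2$ so that the explicit rational function $-(g+k_0^2)(104+203k_0^2)/(24\pi^2(1+k_0^2+k_0^4))$ emerges with the correct numerical constants $104$ and $203$.
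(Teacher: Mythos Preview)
Your proposal is correct and follows essentially the same approach as the paper: verification of the Crandall--Rabinowitz hypotheses via the explicit linearisation $L=\Delta+k_0^2$, the one-dimensional kernel spanned by $\hat\xi_0$, and transversality through $D_MD_vF(0,M^\ast(k_0))\hat\xi_0=\tfrac14\hat\xi_0$. The only cosmetic difference is that the paper computes $\dot M(0)=0$ and $\ddot M(0)$ by quoting Kielh\"ofer's abstract formulas \cite[Eqs.~I.6.3 and I.6.11]{kielhofer2012} (with the auxiliary element $A=L^{-1}(\Id-Q)\partial_{vv}^2F(0,M^\ast(k_0))[\hat\xi_0,\hat\xi_0]$), whereas you derive the same quantities by direct power-matching in $F(v(s),M(s))=0$; these are equivalent presentations of the same Lyapunov--Schmidt computation, and your Taylor coefficients $N'(0)=\tfrac14$, $N''(0)=-\tfrac12$, $N'''(0)=\tfrac{11}{8}$ are correct.
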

    
	\begin{proof}
		The proof follows from a straightforward application of the Crandall-Rabinowitz theorem, see e.g.~\cite[Thm.~8.3.1]{buffoni2003}. Note that by our choice of $K(v)$ in \eqref{eq:K} we obtain $F(0,M)=0$ for all $M\in\Rbb$.
		
		\noindent\textbf{Step 1:} Existence of local bifurcation curve. We need to check the following three conditions:
		the linearisation $L$ of $F\sprlr{\cdot,M^*(k_0)}$ about $v\equiv0$ is Fredholm of index zero and has a one-dimensional kernel. Finally, the transversality condition \eqref{transversality} needs to be satisfied. 
		
		We calculate the linearisation of $F\sprlr{\cdot,M^\ast(k_0)}$ about $v\equiv 0$
		\begin{equation*}
			L\coloneqq \partial_v F\sprlr{0,M^\ast(k_0)}
			=\Delta+\sprlr{\frac{1}{4}M^\ast(k_0)-g}
			=\Delta+k_0^2.
		\end{equation*}
		Note that for $w \in \Xcal$ by the vanishing mean condition the derivative of the constant term vanishes
		\begin{equation*}
			\partial_v K(0)(w)
			=
			\fint_\Omega \frac{w}{4} \de x
			=0.
		\end{equation*}
		
		The Fourier symbol of $L$ is given by
		\begin{equation*}
			\hat{L}(\bfk) = k_0^2 \sprlr{1-\abslr{\bfk}^2},\qquad \bfk=
			\begin{pmatrix}
				k_1\\
				k_2
			\end{pmatrix}
			\in\Zbb^2,
		\end{equation*}
		and therefore
		\begin{equation*}
			\ker \hat{L}=\set{\bfk\in\Zbb^2}{\abs{\bfk}=1}.
		\end{equation*}
		By $\square$-symmetry, this reduces to the one-dimensional subspace of $\Xcal$
		\begin{equation}
			\label{ker L}
			\ker L = \mathrm{span}
			\dprlr{
            \sum_{\boldsymbol{\gamma}\in\Lambda_\square}
            a_{\boldsymbol{\gamma}}\ee^{\ii \boldsymbol{\gamma}\cdot\bfx}
            \ \ \textup{\textbf{:}}\ \  
            \textrm{$a_{\boldsymbol{\gamma}}$ satisfy \eqref{eq:cond-symmetry}}
            }
            = \mathrm{span}
			\dprlr{\hat{\xi}_0}
		\end{equation}
        and we denote a kernel element and its normalisation by
        \begin{equation}
        \label{kernel element squ}
            \xi_0\coloneqq\ee^{\ii k_0x}+\ee^{-\ii k_0x}+\ee^{\ii k_0y}+\ee^{-\ii k_0y},
            \qquad
            \hat{\xi}_0\coloneqq\frac{\xi_0}{\normlr{\xi_0}_\Xcal}.
        \end{equation}
        
		The range of $L$ in $\Ycal$ has codimension one, since
		\begin{equation*}
			\ran L 
			=\mathrm{span}
			\dprlr{
            \sum_{\boldsymbol{\gamma}\in\Gamma \setminus\Lambda_\square}
            a_{\boldsymbol{\gamma}}\ee^{\ii \boldsymbol{\gamma}\cdot\bfx}
            \ \ \textup{\textbf{:}}\ \  
            \textrm{$a_{\boldsymbol{\gamma}}$ satisfy \eqref{eq:cond-symmetry}}}, 
		\end{equation*}
		which proves that $L$ is Fredholm of index zero with one-dimensional kernel.
		
		Finally, we check the transversality condition
		\begin{equation}
			\label{transversality}
			\partial^2_{v,M}
			F\sprlr{0,M^\ast(k_0)}(\xi_0,1)
			\notin \ran L.
		\end{equation}
		Therefore, we calculate
		\begin{equation*}
			\partial^2_{v,M}F\sprlr{0,M^\ast(k_0)}(\xi_0,1)
			=
			\partial_v\sprlr{
				\frac{1}{2+v}+\log\sprlr{\frac{1+v}{2+v}}-K(v)
			}\bigg\rvert_{v=0}(\xi_0,1)
			=
			\frac{1}{4}\xi_0
			\notin \ran L.
		\end{equation*}
        Hence, the Crandall–Rabinowitz theorem applies and gives the existence of a local bifurcation curve $\Ccal_\loc$.
		
		\noindent\textbf{Step 2:} Expansion of solutions and local behavior of bifurcation curve. Using \cite[Cor. I.5.2]{kielhofer2012}, we obtain
		\begin{equation*}
			v(s)=s\hat{\xi}_0+\tau(s),
		\end{equation*}
		where $\tau(s)=\Ocal(\abs{s}^2)$ in $\Xcal$ as $s\to0$.
		Next, we calculate the Taylor expansion of $M(s)$ about $s=0$.
		The first derivative of $M(s)$ at $s=0$ vanishes by \cite[Eq. I.6.3]{kielhofer2012} and 
		\begin{equation*}
			\partial_{vv}^2F\sprlr{0,M^\ast(k_0)}\left[\hat{\xi}_0,\hat{\xi}_0\right]=-\frac{M^\ast(k_0)}{2}\hat{\xi}_0^2+\frac{M^\ast(k_0)}{2}\fint_\Omega \hat{\xi}_0^2 \de x\in\ran L.
		\end{equation*}
		The second derivative is given by \cite[Eq. I.6.11]{kielhofer2012}
		\begin{equation*}
			\ddot{M}(0)=-\frac13\frac{
				\int_\Omega \sprlr{Q\partial_{vvv}^3F\sprlr{0,M^\ast(k_0)}\left[\hat{\xi}_0,\hat{\xi}_0,\hat{\xi}_0\right]-3Q\partial_{vv}^2F\sprlr{0,M^\ast(k_0)}\left[\hat{\xi}_0,A\right]}
				\cdot\hat{\xi}_0\de x
			}{
				Q\int_\Omega
				\partial_{v,M}^2F\sprlr{0,M^\ast(k_0)}\hat{\xi}_0\cdot\hat{\xi}_0
				\de x
			},
		\end{equation*}
		where $Q$ is the projection in $\Ycal$ along $\ran L$, and 
        \begin{align*}
			A&\coloneqq L^{-1}(\Id-Q)\partial_{vv}^2F\sprlr{0,M^\ast(k_0)}\left[\hat{\xi}_0,\hat{\xi}_0\right].
        \end{align*}
		This yields
		\begin{equation*}
			\ddot{M}(0)=-\frac{\sprlr{g+k_0^2}\sprlr{104+203k_0^2}}
            {24\pi^2\sprlr{1+k_0^2+k_0^4}},
		\end{equation*}
        which completes the proof of the expansion \eqref{expansions squ}.
	\end{proof}
	
	\subsection{Local bifurcation of hexagonal patterns}
	\label{subsec hex setup bif}
    In this subsection we prove bifurcation of hexagonal patterns. The proof is similar to the square case and we only provide the necessary modifications. The main difference comes from the fact that hexagonal patterns originate from a transcritical bifurcation.
    
    \begin{theorem}[Local bifurcation on the hexagonal lattice]
		\label{thm loc bif hex}
		Fix $k_0>0$ and let $\Omega=\hexagon$. Then a transcritical bifurcation occurs at $(v(0),M(0)) = (0,M^*(k_0))=(0,M^\ast+4 k_0^2)$, that is, there exists $\varepsilon>0$ and a branch of solutions 
		\begin{equation*}
			\Ccal_\loc\coloneqq\set{\sprlr{v(s),M(s)}}{s\in(-\varepsilon,\varepsilon)}
			\subset \Ucal\times\Rbb
		\end{equation*}
		to the bifurcation problem \eqref{bif problem}
		with expansions
		\begin{align}
			\label{expansions hex}
			\begin{split}
				v(s)&=s\hat{\psi}_0+\tau(s),
				\\
				M(s)&=M^\ast(k_0)+
                \frac{2\sprlr{g+k_0^2}}{3^\frac14\pi \sqrt{k_0^2+1+\frac{1}{k_0^2}}}
                s+\Ocal(\abs{s}^2),
			\end{split}
		\end{align}
		where $\tau(s)=\Ocal(\abs{s}^2)$ in $\Xcal$ as $s\to0$, and $\hat{\psi}_0$ is the normalised kernel element defined in \eqref{kernel element hex}.
	\end{theorem}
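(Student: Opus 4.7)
The plan is to mirror the proof of \cref{thm loc bif squ} by applying the Crandall--Rabinowitz theorem \cite[Thm.~8.3.1]{buffoni2003} at $(v,M)=(0,M^\ast(k_0))$, with the essential difference occurring in the computation of the leading-order term of the expansion of $M(s)$, where the hexagonal resonance plays a decisive role.

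First, I would identify the linearisation $L = \partial_v F(0,M^\ast(k_0)) = \Delta + k_0^2$, whose Fourier symbol $\hat L(\bfk) = k_0^2(1-|\bfk|^2/k_0^2)$ vanishes precisely on the circle $|\bfk|=k_0$. Imposing $\hexagon$-periodicity and $D_6$-symmetry together with the vanishing-mean condition, the only critical Fourier modes are those in $\Lambda_\hexagon$ from \eqref{eq:def-Lambda}, and the reality and symmetry constraints \eqref{eq:cond-symmetry} collapse the kernel to a one-dimensional space
\begin{equation}
\label{kernel element hex}
\psi_0 \coloneqq \sum_{\boldsymbol{\gamma}\in\Lambda_\hexagon}\ee^{\ii\boldsymbol{\gamma}\cdot\bfx},
\qquad
\hat\psi_0 \coloneqq \frac{\psi_0}{\norm{\psi_0}_\Xcal},
\qquad \ker L = \mathrm{span}\dprlr{\hat\psi_0}.
\end{equation}
As in the square case, $L$ is Fredholm of index zero, and the transversality condition follows from $\partial^2_{v,M}F(0,M^\ast(k_0))(\psi_0,1) = \tfrac14 \psi_0 \notin \ran L$. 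Thus, Crandall--Rabinowitz yields the local branch $\Ccal_\loc$ together with $v(s) = s\hat\psi_0 + \tau(s)$ with $\tau(s) = \Ocal(|s|^2)$ in $\Xcal$ via \cite[Cor.~I.5.2]{kielhofer2012}.

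The key new phenomenon occurs in the expansion of $M(s)$. According to \cite[Eq.~I.6.3]{kielhofer2012},
\begin{equation*}
\dot{M}(0) = -\frac{1}{2}\,\frac{\int_\Omega Q\partial^2_{vv}F(0,M^\ast(k_0))[\hat\psi_0,\hat\psi_0]\cdot\hat\psi_0\de\bfx}{\int_\Omega Q\partial^2_{v,M}F(0,M^\ast(k_0))\hat\psi_0\cdot\hat\psi_0\de\bfx},
\end{equation*}
where $Q$ denotes the projection onto $\ker L$ in $\Ycal$. In the square case $\dot M(0)$ vanished because $\hat\xi_0^2$ generated no modes lying in $\Lambda_\square$. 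Here, however, the resonant triad relation $\bfk_3 = -(\bfk_1+\bfk_2)$ ensures that the product $\hat\psi_0^2$ contains nontrivial contributions at each $\pm\bfk_j$, so $Q\partial^2_{vv}F(0,M^\ast(k_0))[\hat\psi_0,\hat\psi_0]$ is nonzero. This is precisely the mechanism producing a transcritical rather than pitchfork bifurcation. Carrying out the explicit Fourier-counting computation of the inner product and normalising with $\norm{\psi_0}_\Xcal^2 = 6(1+k_0^2+k_0^4)\cdot|\hexagon|$, where $|\hexagon|= \frac{8\pi^2}{\sqrt{3}k_0^2}$ is the area of the hexagonal fundamental domain, should yield the coefficient $\frac{2(g+k_0^2)}{3^{1/4}\pi\sqrt{k_0^2+1+1/k_0^2}}$ displayed in \eqref{expansions hex}.

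The main obstacle will be the correct bookkeeping of the resonant Fourier interactions and the normalisation factor arising from the hexagonal fundamental domain. Unlike the square case, where many quadratic modes automatically fall out of the kernel, here one has to carefully track which products of $\ee^{\pm\ii\bfk_i\cdot\bfx}$ land back in $\Lambda_\hexagon$ (namely those with $\bfk_i\pm\bfk_j = \pm\bfk_\ell$), compute the multiplicities, and combine the resulting scalars with the $D_6$-adjusted $\Xcal$-norm of $\psi_0$. The $\Ocal(|s|^2)$ error term in $M(s)$ then follows routinely from analyticity of $F$ and the smoothness of the Crandall--Rabinowitz branch.
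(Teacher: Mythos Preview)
Your proposal is correct and follows essentially the same route as the paper's own proof: both apply Crandall--Rabinowitz after verifying that the $D_6$-symmetry collapses $\ker L$ to $\mathrm{span}\dprlr{\hat\psi_0}$, check transversality via $\partial^2_{v,M}F(0,M^\ast(k_0))(\psi_0,1)=\tfrac14\psi_0\notin\ran L$, and then compute $\dot M(0)$ from \cite[Eq.~I.6.3]{kielhofer2012}, exploiting the resonant triad to see that the quadratic term does not vanish. Your normalisation $\norm{\psi_0}_\Xcal^2 = 6(1+k_0^2+k_0^4)\,|\hexagon|$ with $|\hexagon|=\tfrac{8\pi^2}{\sqrt{3}k_0^2}$ is exactly what the paper uses implicitly when it records $\dot M(0)=\tfrac{8(g+k_0^2)}{\norm{\psi_0}_\Xcal}$.
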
 

\begin{proof}
    Fortunately, $L=\Delta+k_0^2$ still acts diagonally on Fourier modes, and we can compute
	\begin{align}
		\label{L on hexagonal mode}
		L
		\ee^{\ii k_0\left(n_1x-\frac{n_2}{2}x+\frac{\sqrt3 n_2}{2}y\right)}
		=
		k_0^2\sprlr{1-n_1^2-n_2^2+n_1n_2}
		\ee^{\ii k_0\left(n_1x-\frac{n_2}{2}x+\frac{\sqrt3 n_2}{2}y\right)}.
	\end{align}
	A Fourier mode with wave number $\bfk = n_1 \k_1 + n_2 \k_2$ lies in $\ker L$ if and only if $\bfk \in \Lambda_{\hexagon}$.
    Hence, by $\hexagon$-symmetry, $\ker L$ is the one-dimensional subspace of $\Xcal$ given by
	\begin{equation*}
		\ker L = \mathrm{span}
		\dprlr{
            \sum_{\boldsymbol{\gamma}\in\Lambda_\hexagon^\ast}
            a_{\boldsymbol{\gamma}}\ee^{\ii \boldsymbol{\gamma}\cdot\bfx}
            \ \ \textup{\textbf{:}}\ \  
            \textrm{$a_{\boldsymbol{\gamma}}$ satisfy \eqref{eq:cond-symmetry}}
        }
        = \mathrm{span}
		\dprlr{\hat{\psi}_0}
	\end{equation*}
    and we denote a kernel element and its normalisation by
    \begin{equation}
    \label{kernel element hex}
        \psi_0\coloneqq
        \ee^{\ii k_0x}
        +\ee^{\ii k_0\left(-\frac{1}{2}x+\frac{\sqrt3}{2}y\right)}
        +\ee^{\ii k_0\left(x-\frac{1}{2}x+\frac{\sqrt3}{2}y\right)}
        +\,\text{c.c.},
        \qquad
        \hat{\psi}_0\coloneqq\frac{\psi_0}{\normlr{\psi_0}_\Xcal},
    \end{equation}
    where $\text{c.c.}$ denotes the complex conjugated terms.
    Therefore, similar to the square case the existence of a local bifurcation curve follows by Crandall--Rabinowitz. 
    
	The main difference to the square case is the behavior of $M(s)$ close to $s=0$.
	In particular, 
	\begin{equation*}
		\dot{M}(0)=-\frac12\frac{\langle\partial_{vv}^2F\sprlr{0,M^\ast(k_0)}\left[\hat{\psi}_0,\hat{\psi}_0\right],\hat{\psi}_0\rangle}{\langle\partial_{v,M}^2F\sprlr{0,M^\ast(k_0)}\left[\hat{\psi}_0,1\right],\hat{\psi}_0\rangle} = \frac{8\sprlr{g+k_0^2}}{\normlr{\psi_0}_\Xcal}=\frac{2\sprlr{g+k_0^2}}{3^\frac14\pi \sqrt{k_0^2+1+\frac{1}{k_0^2}}} >0
	\end{equation*}
    by \cite[Eq. I.6.3]{kielhofer2012}.
	Therefore, a transcritical bifurcation takes place and the proof is complete.
\end{proof}

    \subsection{Spectral stability analysis of nontrivial solutions}\label{sec:spec-stability}

    We now briefly discuss the stability of the solutions on the local bifurcation branch as solutions to the thin-film equation \eqref{eq:thin-film} with respect to different types of perturbations using spectral analysis. First, we consider the case of $\Lrm^2$-localised perturbations. It is straightforward to see that the periodic solutions on the local bifurcation branch are spectrally unstable with respect to this class of perturbations. Indeed, we can write the linearisation about the periodic solutions as $\Lcal_{\mathrm{per},M(s)} = \Lcal_{M^*(k_0)} + s \Lcal_1(s)$ with $s \in (-\varepsilon, \varepsilon)$, where $\Lcal_{M^*(k_0)}$ is the linearisation about the pure conduction state $\Bar{h}\equiv1$, see \eqref{linearised operator}. Since $\Lcal_{M^*(k_0)}$ has $\Lrm^2$-spectrum in the positive complex half-plane, we can apply the perturbation results in \cite[Thm.~IV.3.1]{kato1995} using that $\Lcal_1(s)$ is $\Lcal_{\mathrm{per},M(s)}$-bounded, see also \cite[Sec. 6.1]{bruell2024}. This yields that $\Lcal_{\mathrm{per},M(s)}$ has $\Lrm^2$-spectrum in the positive complex half-plane. Therefore, the solutions are spectrally unstable.

    Next, we consider perturbations, which are periodic with respect to a Fourier lattice $\tilde{\Gamma}$ and satisfy the symmetry condition \eqref{eq:cond-symmetry}. Then, we distinguish three cases. If the perturbations are superharmonic, that is, $\tilde{\Gamma} = N\Gamma$ for $N \in \N$, $N > 1$, the periodic solutions on the local bifurcation branch are spectrally stable. This follows from a similar perturbation argument as in the case of $\Lrm^2$-perturbations. If the perturbations are subharmonic, that is, $\tilde{\Gamma} = \tfrac{1}{N} \Gamma$ for $N \in \N$, $N > 1$, the periodic solutions on the local bifurcation branch are spectrally unstable again following a similar perturbation argument.

    \subsection{Nonlinear stability analysis for co-periodic perturbations} \label{sec:stability-coperiodic}

    Finally, we consider the case where $\tilde{\Gamma} = \Gamma$, that is, the perturbations are co-periodic. In this case, we can analyse the stability of solutions on the local bifurcation branch using a centre manifold reduction. For this, we consider solutions to the thin-film equation \eqref{eq:thin-film} which are periodic with respect to $\Gamma$ and satisfy the symmetry conditions \eqref{eq:cond-symmetry}. Then, at $M = M^*(k_0)$, the linearisation has a simple eigenvalue $\lambda = 0$ on the imaginary axis and there is a spectral gap around the imaginary axis due to the discreteness of the Fourier lattice. Therefore, there exists a one-dimensional invariant center manifold, see, e.g.~\cite{haragus2011b}. 
    
    In the case $\Omega = \square$, we recall that square patterns arise from a subcritical pitchfork bifurcation. Therefore, the normal form, see, e.g.~\cite{haragus2011b}, of the reduced equation on the centre manifold reads as
    \begin{equation}\label{eq:red-eq-square}
        \partial_T A = \alpha_1 M_0 A + \alpha_2 A^3 + \rho_\mathrm{sq}(A)
    \end{equation}
    with $\alpha_1, \alpha_2 > 0$ and $\rho_\mathrm{sq}(A) = \Ocal(\mu|A|^4)$, where $\mu^2 M_0 = M(s) - M^*(k_0)$ and $0 < \mu \ll 1$ denotes the distance from the bifurcation point, $T = \mu^2 t$, and $\mu A$ is the amplitude of the kernel element $\hat{\xi}_0$. In particular, square patterns in \eqref{eq:thin-film} correspond to a non-trivial fixed point in \eqref{eq:red-eq-square}. Since all small, bounded solutions to \eqref{eq:thin-film} which are periodic with respect to $\Gamma$ and satisfy the symmetry condition \eqref{eq:cond-symmetry} lie on the centre manifold, the stability of square patterns in \eqref{eq:thin-film} with respect to co-periodic and $\square$-symmetric perturbations corresponds to the stability of the nontrivial fixed point in \eqref{eq:red-eq-square}. A straightforward calculation shows that the linearisation about the fixed point $A_{\mathrm{sq}} = \pm \sqrt{-\tfrac{\alpha_1 M_0}{\alpha_2}}$ is given by $\Lcal_\mathrm{sq} = -2M_0\alpha_1 > 0$ since $M_0 < 0$ and $\alpha_1 > 0$. Therefore, square patterns are linearly unstable. For a more detailed analysis in a similar thin-film system, we refer to \cite{hilder2024a}.

    In the case $\Omega = \hexagon$, hexagonal patterns arise from a transcritical bifurcation, see \Cref{thm loc bif hex}. Thus, the normal form in this case reads as
    \begin{equation}\label{eq:red-eq-hex}
        \partial_T B = \alpha_1 M_0 B + \beta B^2 + \rho_\mathrm{hex}(B),
    \end{equation}
    where $\alpha_1 > 0$ is the same coefficient as in \eqref{eq:red-eq-square} and $\rho_\mathrm{hex}(B) = \Ocal(\mu^2 |B|^3)$, $T = \mu^2 t$ and $\mu^2 B$ is the amplitude of the kernel element $\hat{\psi}_0$. Since fixed points of \eqref{eq:red-eq-hex} have to agree with the bifurcation diagram obtained in \Cref{thm loc bif hex}, we find that $\beta = - \alpha_1 \dot{M}(0) < 0$. Again, linearising about the non-trivial fixed point $B_\mathrm{hex} = \tfrac{-\alpha_1 M_0}{\beta}$, we find that $\Lcal_\mathrm{hex} = -\alpha_1 M_0$, which is positive for $M_0 < 0$ and negative for $M_0 > 0$. Therefore, we find that down-hexagons are linearly unstable whereas up-hexagons are linearly stable with respect to co-periodic and $\hexagon$-symmetric perturbations.

    \begin{remark}
        We point out that in contrast to the localised or non-co-periodic case, we obtain nonlinear (in)stability in the co-periodic case. This follows from the observation that the dynamics are described by a one-dimensional ODE with a sufficiently regular nonlinearity, and therefore, we obtain the nonlinear behaviour from a Duhamel argument.
    \end{remark}

    \begin{remark}
        Comparing the stability results for hexagonal patterns presented here with the stability discussion in \cite{hoyle2007}, we obtain a transition from unstable to stable by passing through the bifurcation point, while \cite{hoyle2007} finds that solutions on both local branches are unstable. This is the case because we restrict to the class of $\hexagon$-symmetric perturbations, while the more general class of perturbations which are periodic with respect to $\Gamma$, but do not necessarily satisfy the symmetry conditions \eqref{eq:cond-symmetry}, is considered in \cite{hoyle2007}.
    \end{remark}

    \begin{remark}
        In the one-dimensional case \cite{bruell2024}, one can use the results in \cite{laugesen2000,laugesen2002} to obtain linear and energy instability even for large-amplitude stationary periodic solutions with respect to co-periodic perturbations. However, these arguments do not seem to generalise to the two-dimensional case, see \Cref{sec:discussion} for a more detailed discussion.
    \end{remark}
	
	\section{Global bifurcation theory}
    \label{sec glob bif}
    In this section, we extend the local bifurcation curves to global ones using the analytic global bifurcation theorem \cite[Thm. 6]{constantin2016a}.
    Thereafter, we perform a detailed qualitative analysis allowing us to exclude the 'closed loop'-alternative \ref{loop} in \cref{thm glob bif}, see \cref{prop not loop}.
    We achieve this using the result \cite[Thm. 9.2.2]{buffoni2003} on global bifurcation in cones, which also establishes a nodal property.
    \Cref{prop no derivative blowup} excludes a mere blow-up of derivatives along the bifurcation branch by using a uniform regularity estimate.
    Lastly, we present a conditional result on film rupture in \cref{prop cond rupture}: 
    if the bifurcation parameter remains bounded, i.e. $\abs{M(s)}$ admits a bound independent of $s$, then the 'boundary'-alternative \ref{approach boundary} in \cref{thm glob bif} has to occur. Specifically, there exists a subsequence $(v(s_j))_{j\in\N}$ so that $\min v(s_j) \to -1$ as $j\to \infty$. However, unlike in the one-dimensional case, we cannot prove sufficiently strong uniform bounds to be able to prove the existence of a stationary weak solution exhibiting film rupture.
    
	\subsection{Existence of a global bifurcation branch}
	We first state the global bifurcation result. It is independent of $\Omega\in\{\square,\hexagon\}$, meaning that both local bifurcation curves can be extended, and the proof is identical.
    
	\begin{theorem}
		\label{thm glob bif}
		Let $k_0>0$, $\Omega\in\{\square,\hexagon\}$ and
		\begin{equation*}
			\Ccal_\loc=\set{\sprlr{v(s),M(s)}}{s\in(-\varepsilon,\varepsilon)}
			\subset \Ucal\times\Rbb
		\end{equation*}
		be the bifurcation branch with $(v(0),M(0)) = (0,M^*(k_0))$ obtained in \cref{thm loc bif squ} or \cref{thm loc bif hex}, respectively.
		There exists a globally defined continuous curve 
		\begin{equation*}
			\Ccal\coloneqq\set{\sprlr{v(s),M(s)}}{s\in\Rbb}
			\subset \Ucal\times\Rbb
		\end{equation*}
		consisting of smooth $\Omega$-periodic and -symmetric solutions to the bifurcation problem \eqref{bif problem}.
		This curve extends the local bifurcation branch such that at least one of the following occurs:
		\begin{enumerate}[label=(\roman*)]
			\item\label{blow up} 
			$\normlr{\sprlr{v(s),M(s)}}_{\Xcal\times\Rbb}\longrightarrow\infty$ as $s\to\pm\infty$;
			\item\label{approach boundary} 
			there exists a subsequence $s_j\rightarrow\infty$ such that $\dist\sprlr{\bigl(v(s_j),M(s_j)\bigr),\partial\Ucal\times\R}\longrightarrow0$ as $j\to\pm\infty$;
			\item\label{loop}
			$\sprlr{v(s),M(s)}$ is a closed loop.
		\end{enumerate}
	\end{theorem}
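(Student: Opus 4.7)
The plan is to apply the analytic global bifurcation theorem of Buffoni--Toland in the form of \cite[Thm.~6]{constantin2016a}, with the local branch $\Ccal_\loc$ from \cref{thm loc bif squ,thm loc bif hex} as the starting point. Its hypotheses are: (i) $F\colon\Ucal\times\R\to\Ycal$ is real-analytic, (ii) the Fréchet derivative $D_vF(v,M)$ is Fredholm of index zero at every $(v,M)\in\Ucal\times\R$, (iii) bounded and closed subsets of $F^{-1}(0)$ that stay at positive distance from $\partial\Ucal\times\R$ are precompact in $\Xcal\times\R$, and (iv) at the bifurcation point the simple eigenvalue crosses transversally, which is already contained in the proofs of \cref{thm loc bif squ,thm loc bif hex}.

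To verify (i), I would use that for $\dim\Omega=2$ the Sobolev embedding $\Hrm_\per^2(\Omega)\embed\Crm(\Omega)$ guarantees that $\Ucal$ is open in $\Xcal$ and that, on $\Ucal$, one has the uniform pointwise bound $\inf_\Omega(2+v)>1$. The Nemytskii operator $v\mapsto (2+v)^{-1}+\log\bigl((1+v)/(2+v)\bigr)$ is then real-analytic from $\Ucal$ into $\Hrm_\per^2(\Omega)$, and the nonlocal term $MK(v)$ is obtained by composing it with integration over $\Omega$ and is hence analytic with values in the constants. All remaining terms are polynomial in $M$, $v$ and $\Delta v$, so $F$ is real-analytic.

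For (ii), the linearisation at any $(v,M)\in\Ucal\times\R$ has the form
\begin{equation*}
    D_vF(v,M)w=\Delta w+m(v,M)\,w-M\,\partial_v K(v)[w],
\end{equation*}
where $m(v,M)\in\Crm(\Omega)$ is smooth. The Laplacian $\Delta\colon\Xcal\to\Ycal$ is Fredholm of index zero on the mean-zero $\Omega$-symmetric subspaces by standard elliptic theory on the flat torus, multiplication by $m(v,M)$ is compact from $\Xcal$ into $\Ycal$ by Rellich--Kondrachov, and $\partial_vK(v)$ is a rank-one operator taking values in the constants. Hence $D_vF(v,M)$ is a compact perturbation of a Fredholm operator of index zero. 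For (iii), I would use elliptic bootstrap on \eqref{eq:integrated-stationary-problem}: a sequence $\{(v_n,M_n)\}$ of solutions, bounded in $\Xcal\times\R$ with $\inf_n\min_\Omega v_n>-1$, has a right-hand side bounded in $\Hrm_\per^2(\Omega)$ (again using $\Hrm_\per^2\embed\Crm$), so that $v_n$ is bounded in $\Hrm_\per^4(\Omega)$; iteration yields $\Crm^\infty$-bounds, which establishes both the claimed smoothness of solutions along $\Ccal$ and, via compact Sobolev embedding, the desired precompactness in $\Xcal$.

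With (i)--(iv) in place, \cite[Thm.~6]{constantin2016a} produces a continuous global extension $\Ccal=\{(v(s),M(s)):s\in\R\}$ of $\Ccal_\loc$ at which exactly one of the alternatives \ref{blow up}, \ref{approach boundary}, \ref{loop} must occur. The only step that is not routine is the interplay between the Fredholm--compactness machinery and the pointwise lower bound $v>-1$ defining the open set $\Ucal$: whenever this bound fails to be preserved uniformly along a sequence on the branch, the precompactness argument above breaks down, and this is captured precisely by alternative \ref{approach boundary}. No further obstacle arises, and the proof is the same verbatim for $\Omega=\square$ and $\Omega=\hexagon$ since (i)--(iii) are independent of the chosen symmetry.
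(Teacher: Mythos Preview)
Your proposal is correct and follows essentially the same route as the paper: both apply \cite[Thm.~6]{constantin2016a} and verify the analyticity, Fredholm, transversality and compactness hypotheses in the same way, the only cosmetic difference being that the paper factors $\partial_vF(v,M)=\Delta\circ(\Id+\text{compact})$ via $\Delta^{-1}$ while you argue directly that $\partial_vF(v,M)=\Delta+\text{compact}$. One small slip: your separate claim that ``multiplication by $m(v,M)$ is compact $\Xcal\to\Ycal$'' and ``$\partial_vK(v)$ is rank one into the constants'' is imprecise, since neither piece individually lands in the mean-zero space $\Ycal$; only their sum does (it equals $m\,w-\fint_\Omega m\,w$), and it is this combined operator that is compact $\Xcal\to\Ycal$. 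Also, the theorem asserts that \emph{at least} one alternative occurs, not exactly one.
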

	
	\begin{remark}
		 In \cite[Thm. 6]{constantin2016a}, the alternatives \ref{blow up} and \ref{approach boundary} are replaced by the stronger alternative
		\begin{enumerate}[label=(\roman*)']
			\item\label{strong alternative}
			For all bounded and closed sets $Q\subset\Ucal\times\Rbb$ there exists $s_0\in\Rbb$ such that $\sprlr{v(s),M(s)}\notin Q$ for all $\abs{s}>s_0$,
		\end{enumerate}
		see also \cite[Rem. 8]{constantin2016a}. However, \ref{strong alternative} implies that \ref{blow up} or \ref{approach boundary} occurs: we can prove this by defining 
		\begin{equation}
        \label{Qj sets}
			Q_j\coloneqq U_j\times[-j,j],
			\qquad\text{where}\qquad 
			U_j\coloneqq\set{v\in\Xcal}{-1 + \frac{1}{j}\leq v\leq j},
			\qquad\text{for}\,
			j\in\Nbb_+.
		\end{equation}
		
		Alternative \ref{strong alternative} is indeed stronger: while \ref{blow up} implies \ref{strong alternative}, the alternative \ref{approach boundary} does not. 
		One can easily think of a curve which satisfies \ref{approach boundary}, while it does not satisfy any of \ref{loop} and \ref{strong alternative}.
	\end{remark}
	
	\begin{remark}
		As pointed out in \cite[Rem. 8]{constantin2016a}, the alternatives are not exhaustive if we replace \ref{approach boundary} with
		\begin{enumerate}[label=(\roman*)', start=2]
			\item\label{wrong alternative}
			$\dist\sprlr{\sprlr{v(s),M(s)},\partial\Ucal\times\R}\longrightarrow0$ as $s\to\infty$.
		\end{enumerate}
		As a consequence, we need to pass to a subsequence in the conditional film rupture result in \cref{prop cond rupture}.
	\end{remark}
	
	\begin{proof}[Proof of \Cref{thm glob bif}]
		We need to check the conditions of \cite[Thm. 6]{constantin2016a}.
		Condition $(H_1)$ is satisfied as $(0,M)\in\Ucal\times\Rbb$ and $F(0,M)=0$ for all $M\in\Rbb$.
		We have already established that the kernel is one-dimensional as well as the transversality condition in the proofs of \cref{thm loc bif squ,thm loc bif hex}, which yields validity of $(H_2)$.
		It remains to check two conditions:
		\begin{enumerate}[label=($H_{\arabic*}$),start=3]
			\item\label{prop global fredholm}
			$\partial_v F(v,M)$ is a Fredholm operator of index zero, whenever $(v,M)\in\Scal$, where \\$\Scal\coloneqq\set{(v,M)\in\Ucal\times\Rbb}{F(v,M)=0}$ is the set of solutions to the bifurcation problem \eqref{bif problem};
			\item\label{prop global compact}
			for some sequence $\sprlr{Q_j}_{j\in\Nbb}$ of bounded closed sets $Q_j\subset\Ucal\times\Rbb$ with $\bigcup_{j\in\Nbb}Q_j=\Ucal\times\Rbb$, the set $\Scal\cap Q_j$ is compact for all $j\in\Nbb$.
		\end{enumerate}
		We compute for $v_0\in\Xcal$
		\begin{equation*}
			\partial_v F(v,M)v_0
			=
			\Delta v_0 + \sprlr{\frac{M}{(1+v)(2+v)^2}-g} v_0 - \fint_\Omega\frac{M}{(1+v)(2+v)^2}v_0\de x,
		\end{equation*}
		which is a bounded nonlocal operator.
		By the vanishing mean condition and by $\Omega$-periodicity, the Laplacian $\Delta\colon\Xcal\rightarrow\Ycal$ is invertible with bounded inverse, and we may write
		\begin{equation*}
			\partial_v F(v,M)=\Delta\circ\Delta^{-1}\partial_v F(v,M).
		\end{equation*}
		It is useful to define
		\begin{equation*}
			\Tilde{L}_{v,M} v_0 
			\coloneqq \Delta^{-1}\partial_v F(v,M) v_0
			= v_0 + \Delta^{-1}
			\sprlr{
				\sprlr{\frac{M}{(1+v)(2+v)^2}-g} v_0 - \fint_\Omega\frac{M}{(1+v)(2+v)^2}v_0\de x
			}.
		\end{equation*}
		As $\Delta^{-1}\colon\Xcal\to\Xcal$ is compact, we find that $\Tilde{L}_{v,M}$ is of the form identity plus compact and thus Fredholm of index zero. 
		The Laplacian $\Delta\colon\Xcal\rightarrow\Ycal$ is also Fredholm of index zero and so is their composition $\partial_v F(v,M)=\Delta\circ\Tilde{L}_{v,M}$.
		
		We have proved \ref{prop global fredholm}.
		Regarding \ref{prop global compact}, choose the sets $Q_j$ as in \eqref{Qj sets}.
        By standard elliptic regularity theory, solutions to the bifurcation problem \eqref{bif problem} are smooth, and the compact embedding of $\Xcal$ into $\Ycal$ yields the claim.
	\end{proof}
	
	\subsection{Behaviour of the global bifurcation branch}
	We proceed to rule out the closed-loop alternative \ref{loop} in \cref{thm glob bif}.
	Thereafter, we provide a qualitative description of alternative \ref{blow up}: we prove that if blow-up occurs, then $(v(s),M(s))_{s\in\R}$ is already unbounded in $\Ycal\times \R$.
    First, we prove a useful estimate for the local expansions $v(s) = s\hat{\xi}_0 + \tau(s)$ on the square lattice, see \Cref{expansions squ}, and $v(s) = s \hat{\psi}_0 + \tau(s)$ on the hexagonal lattice, see \Cref{expansions hex}.
	\begin{lemma}
		\label{lemma higher tau}
		The remainder term satisfies $\tau(s)=\Ocal(\abs{s}^2)$, as $s\to 0$, in $\Hrm_\per^m(\Omega)$ for all $m\geq 0$.
	\end{lemma}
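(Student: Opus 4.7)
The plan is a standard elliptic bootstrap applied to the equation satisfied by $\tau(s)$, with the $\Xcal = \Hrm_\per^2(\Omega)$-estimate from \cref{thm loc bif squ,thm loc bif hex} as the base case. Write $\phi_0$ for the kernel generator ($\hat{\xi}_0$ in the square case, $\hat{\psi}_0$ in the hexagonal case). Since $\Hrm^2(\Omega) \embed \Lrm^\infty(\Omega)$ in two dimensions and $\tau(s) = \Ocal(\abs{s}^2)$ in $\Xcal$, we have $v(s) > -1 + c$ uniformly for some $c > 0$ and all sufficiently small $\abs{s}$. In particular, $v(s)$ remains in a compact subset of the domain of the smooth nonlinearity $N(v) = \tfrac{1}{2+v} + \log\tfrac{1+v}{2+v}$.

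Expanding the mean-centred nonlinearity as $\tilde{N}(v) \coloneqq N(v) - K(v) = \tfrac{v}{4} + R(v)$, where $R$ is a smooth nonlinear operator vanishing to second order at $v=0$, and using $L\phi_0 = 0$ with $L \coloneqq \Delta + k_0^2$ together with $M^\ast(k_0) = 4(g+k_0^2)$, the bifurcation equation $F(v(s), M(s)) = 0$ rewrites as
\begin{equation*}
    L\tau(s) = \tfrac{M^\ast(k_0) - M(s)}{4}\, v(s) - M(s)\, R\sprlr{v(s)}.
\end{equation*}
Since $L$ is a constant-coefficient second-order elliptic operator on the torus, standard elliptic regularity gives the a priori estimate
\begin{equation*}
    \norm{\tau(s)}_{\Hrm_\per^{m+2}} \leq C \sprlr{ \norm{L\tau(s)}_{\Hrm_\per^m} + \norm{\tau(s)}_{\Lrm_\per^2} }
\end{equation*}
for every integer $m \geq 0$, with no orthogonality hypothesis required.

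The proof then proceeds by induction on $m$. The base case $m \leq 2$ follows from the $\Xcal$-estimate and continuous embedding. For the inductive step, assume $\tau(s) = \Ocal(\abs{s}^2)$ in $\Hrm_\per^m$ for some $m \geq 2$; then $v(s) = \Ocal(\abs{s})$ in $\Hrm_\per^m$. Since $\Hrm_\per^m(\Omega)$ is a Banach algebra in two dimensions for $m \geq 2$, composition with the smooth map $R$ on $\{v > -1+c\}$ yields $\norm{R(v(s))}_{\Hrm_\per^m} \leq C\norm{v(s)}_{\Hrm_\per^m}^2 = \Ocal(\abs{s}^2)$. Combined with $M(s) - M^\ast(k_0) = \Ocal(\abs{s})$ from \cref{expansions squ,expansions hex}, the right-hand side of the displayed identity is $\Ocal(\abs{s}^2)$ in $\Hrm_\per^m$, and the elliptic estimate then upgrades $\tau(s)$ to $\Ocal(\abs{s}^2)$ in $\Hrm_\per^{m+2}$. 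Iteration covers all even integers $m \geq 2$, and odd intermediate indices follow from $\Hrm_\per^{m+1} \embed \Hrm_\per^m$.

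The argument is essentially routine; the main point to watch is that the induction must be anchored at $m=2$, since the Banach-algebra property of $\Hrm_\per^m(\Omega)$ in two spatial dimensions becomes available exactly there. The pre-existing $\Xcal$-bound on $\tau(s)$ conveniently supplies the $\Lrm^2$-term in the elliptic estimate so that one does not need to decompose $\tau(s)$ against $\ker L$.
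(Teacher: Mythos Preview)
Your proof is correct and follows essentially the same elliptic bootstrap as the paper: both rewrite the equation for $\tau(s)$ so that a second-order constant-coefficient elliptic operator acts on $\tau(s)$ with a right-hand side of order $\abs{s}^2$, then iterate the regularity estimate. Your organisation is slightly cleaner---by using $L=\Delta+k_0^2$ rather than $\Delta$ you make the cancellation $L\phi_0=0$ explicit and obtain the manifestly quadratic right-hand side $\tfrac{M^\ast(k_0)-M(s)}{4}v(s)-M(s)R(v(s))$ directly, whereas the paper packages everything into a single term $g_s(\tau(s))$ and then Taylor-expands in $(\tau,s)$ to extract the same cancellation; you also invoke the Banach-algebra property of $\Hrm_\per^m$ for $m\geq 2$ more explicitly than the paper does.
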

	\begin{proof}
		Let $\Omega=\square$.
		Recall that $\hat{\xi}_0$ is defined in \Cref{kernel element squ}.
		By the expansion \eqref{expansions squ} of the local bifurcation branch, we know $v(s)= s\hat{\xi}_0+\tau(s)$, where $\tau=\Ocal(\abs{s}^2)$ in $\Xcal$.
		We write $f_s(v)\coloneqq F(v,M(s))-\Delta v$, and hence
		\begin{equation}
			\label{remainder global}
			\Delta\tau
			-sk_0^2\hat{\xi}_0
			+f_s\sprlr{s\hat{\xi}_0+\tau(s)}
			=0.
		\end{equation}
		We expand $v\mapsto f_s(v)$ at $v\equiv0$ in $\Xcal$ and use the expansion of $M(s)$ in \eqref{expansions squ} to obtain
		\begin{equation}
			\label{global expansion f}
			f_s(v)=0+
			\sprlr{-g+\frac{1}{4}\sprlr{M^\ast(k_0)+\Ocal(\abs{s}^2)}}v+\Ocal(\norm{v}_\Xcal^2)
			=k_0^2v+\Ocal(\abs{s}^2\norm{v}_\Xcal)+\Ocal(\norm{v}_\Xcal^2).
		\end{equation}
		Now define
		\begin{equation*}
			g_s(\tau)\coloneqq
			-sk_0^2\hat{\xi}_0+f_s\sprlr{s\hat{\xi}_0+\tau}
			=
			-sk_0^2\hat{\xi}_0
			+k_0^2\sprlr{s\hat{\xi}_0+\tau}+\Ocal(\abs{s}^2)
		\end{equation*}
		and note that $g_s(\tau(s))=\Ocal(\abs{s}^2)$ in $\Xcal$. 
		This holds as $v(s)=\Ocal(\abs{s})$ in \eqref{global expansion f}.
		We now rewrite \eqref{remainder global} as
		\begin{equation*}
			\Delta\tau(s)+g_s(\tau(s))=0.
		\end{equation*}
		Applying the elliptic higher interior regularity estimate \cite[Sec.~6.3.1, Thm. 2]{evans2010a} on the ball $\Brm_{2\pi/k_0}(0)\supset\overline{\Omega}$, we then obtain
		\begin{equation}
			\label{higher regularity global}
			\norm{\tau(s))}_{\Hrm_\per^m(\Omega)}
			\leq C
			\sprlr{\norm{\tau(s))}_{\Lrm_\per^2(\Omega)}+\norm{g_s(\tau(s))}_{\Hrm_\per^{m-2}(\Omega)}},
		\end{equation}
		where we use periodicity to replace the norms by their periodic counterparts.
		An expansion of 
		\begin{equation*}
			g\colon\Hrm_\per^{m-2}(\Omega)\times\R\longrightarrow\Hrm_\per^{m-2}(\Omega),\qquad
			(\tau,s)\longmapsto g_s(\tau(s))
		\end{equation*}
		at $(\tau,s)=(0,0)$ combined with the expansion of $M(s)$ yields
		\begin{align}
		\label{order of g}
			\normlr{g_s(\tau(s))}_{\Hrm_\per^{m-2}(\Omega)}&=
			\normlr{0+
				\sprlr{\frac{M^\ast}{4}-g}\tau(s)
				+\sprlr{
					\sprlr{\frac{M^\ast}{4}-g-k_0^2}\hat{\xi}_0
					+\sprlr{\frac{M^\ast}{4}-g}\tau'(s)
				}s}_{\Hrm_\per^{m-2}(\Omega)}
			+\Ocal(\abs{s}^2)
			\notag\\
			&=
			\normlr{
				k_0^2\tau(s)
				+k_0^2\tau'(s)
				s}_{\Hrm_\per^{m-2}(\Omega)}
			+\Ocal(\abs{s}^2)
			=\Ocal(\abs{s}^2).
		\end{align}
		Bootstrapping \eqref{higher regularity global} implies the claim.
		
		The same proof works in the case $\Omega=\hexagon$.
		Despite $M(s)$ having a linear term in this case, \eqref{order of g} remains true with $\hat{\xi}_0$ replaced with $\hat{\psi}_0$.
		The reason is that the additional term is cancelled by the corresponding term in the expansion of $M(s)K(s\hat{\psi}_0+\tau(s))$.
	\end{proof}

    We employ the estimate established in \cref{lemma higher tau} in the proof of the next result: 
    the global bifurcation branch is contained in a closed cone in $\Xcal\times\R$, which implies that the curve does not form a closed loop. Another direct consequence of this result is that the solutions along the bifurcation branch satisfy a nodal property and have a certain monotonicity, see \cref{remark nodal}. For the proof, we follow the strategy in \cite{healey1991}, which relies on an application of Hopf's lemma on a subdomain of the fundamental domain.
    
	\begin{proposition}
		\label{prop not loop}
		The global bifurcation branch obtained in \cref{thm glob bif} is not a closed loop.
	\end{proposition}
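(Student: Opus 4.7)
\emph{Plan.} The proof will apply the global bifurcation in cones result \cite[Thm.~9.2.2]{buffoni2003}. I will (i) define an open cone $\Kcal \subset \Xcal$ adapted to the nodal structure of the kernel element $\hat\xi_0$ (respectively $\hat\psi_0$); (ii) show that the local branch enters $\Kcal$ for $s \in (0,\varepsilon)$; and (iii) verify that every solution of \eqref{bif problem} lying on $\partial \Kcal$ is identically zero. The cone theorem then forces $v(s) \in \Kcal$ for all $s>0$, and since $0 \notin \Kcal$, the global curve cannot return to the trivial solution $(0,M^\ast(k_0))$ and therefore does not close up, excluding alternative \ref{loop} in \cref{thm glob bif}.

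\emph{Choice of cone and entrance of the local branch.} For $\Omega = \square$, let $Q \coloneqq (0,\pi/k_0) \times (0,\pi/k_0)$, which is the quarter of the fundamental domain carved out by the symmetry and periodicity lines; for $\Omega = \hexagon$, I would use the analogous $60^\circ$-sector of the hexagon. I would define $\Kcal$ via the strict monotonicity of the kernel element on $Q$, for instance by requiring
\begin{equation*}
    \partial_x v(x,y)/\sin(k_0 x) < 0 \quad \text{and} \quad \partial_y v(x,y)/\sin(k_0 y) < 0 \quad \text{on } \overline Q,
\end{equation*}
so that $\hat\xi_0 \propto \cos(k_0 x)+\cos(k_0 y)$ lies in $\Kcal$ (with the obvious analogue $\hat\psi_0$ in the hexagonal case). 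To verify $v(s) \in \Kcal$ for $s > 0$ small, I would invoke \cref{lemma higher tau} to bound $\tau(s) = \Ocal(s^2)$ in $\Hrm^m_\per(\Omega)$ for large $m$, hence in $\Crm^2(\overline \Omega)$ by Sobolev embedding, and exploit the parity of $\tau(s)$ inherited from $\Omega$-symmetry (so that $\partial_x\tau(s)$ vanishes on $\{x=0\}\cup\{x=\pi/k_0\}$) to write $\partial_x v(s)/\sin(k_0 x) = -sk_0 c + \Ocal(s^2)$ uniformly on $\overline Q$ with $c>0$.

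\emph{Trivial solutions on $\partial \Kcal$.} Since the integration constant $K(v)$ in \eqref{eq:K} is independent of $(x,y)$, differentiating \eqref{bif problem} in $x$ shows that $w \coloneqq \partial_x v$ satisfies the linear homogeneous elliptic equation
\begin{equation*}
    \Delta w + \sprlr{\frac{M}{(1+v)(2+v)^2}-g}\, w = 0 \qquad \text{in } Q,
\end{equation*}
with $w = 0$ on $\{x=0\}\cup\{x=\pi/k_0\}$ and $\partial_y w = 0$ on $\{y=0\}\cup\{y=\pi/k_0\}$ by $\Omega$-symmetry. Suppose $v \in \partial \Kcal$ is a nontrivial solution to \eqref{bif problem}; then $w \le 0$ on $\overline Q$ with a zero at some $(x_0,y_0) \in \overline Q$. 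An interior zero is ruled out by the strong maximum principle, which forces $w \equiv 0$ in $Q$; a zero on the Neumann part of $\partial Q$ is handled by even reflection of $v$ across that line, turning the zero into an interior zero of the extended elliptic problem; and a zero on the Dirichlet part is ruled out by Hopf's lemma applied to the reflected problem. Combining this with the analogous argument for $\partial_y v$ and the $\Omega$-symmetry forces $v$ to be constant on $\overline\Omega$, and the vanishing mean condition then yields $v \equiv 0$.

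\emph{Conclusion and main obstacle.} Combining the three ingredients via \cite[Thm.~9.2.2]{buffoni2003} gives $v(s) \neq 0$ for all $s > 0$, which excludes alternative \ref{loop}. I expect the principal technical difficulty to be step (ii): because $\nabla \hat\xi_0$ vanishes precisely on the boundary of $Q$, the sign conditions defining $\Kcal$ cannot be verified by pointwise comparison with the $\Ocal(s^2)$ correction without first extracting the correct vanishing rate, which is why the high-regularity control from \cref{lemma higher tau} together with the parity of $\tau(s)$ is essential. The hexagonal case requires a careful choice of fundamental sector and an adaptation of the Hopf-lemma argument across the inclined reflection lines, but follows the same template.
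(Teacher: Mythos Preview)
Your proposal is correct and takes essentially the same approach as the paper: both invoke \cite[Thm.~9.2.2]{buffoni2003} with a cone encoding monotonicity of $v$ on a fundamental subdomain, verify the key openness condition by differentiating \eqref{bif problem} and applying the strong maximum principle/Hopf's lemma to the resulting linear elliptic equation for $\nabla v$, and use \cref{lemma higher tau} together with Sobolev embedding to control the boundary behaviour and show the local branch enters the cone. The paper works with the closed cone $\{\partial_x v,\partial_y v \geq 0 \text{ on } D_\square\}$ rather than your open ratio-based cone (which sidesteps having to make sense of $\partial_x v/\sin(k_0 x)$ on $\overline Q$ for general $v\in\Xcal$), and it additionally verifies the kernel hypothesis that $\ker\partial_vF(0,M)\cap\Kcal\neq\{0\}$ forces $M=M^\ast(k_0)$ --- a condition of the cited theorem that your outline omits but which is straightforward to check.
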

	
		\begin{figure}[h!]
		\centering
		\includegraphics[width=0.35\textwidth]{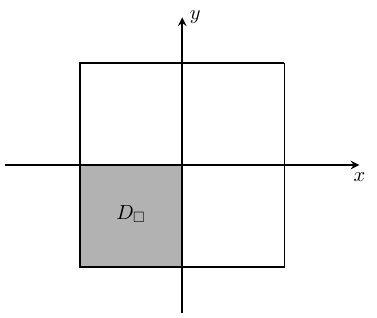}\hspace{2cm}
        \includegraphics[width=0.35\textwidth]{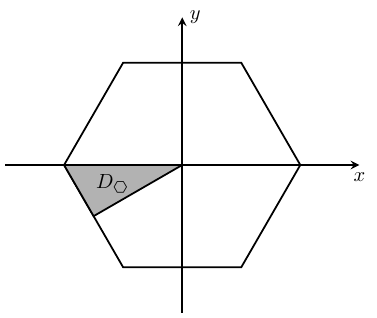}
		\caption{Subdomains $D_{\square}$ (left) and $D_\hexagon$ (right) of the fundamental domain on which $v$ is monotonous.}
		\label{fig:slicemono}
	\end{figure}
	
	\begin{proof}
		We employ \cite[Thm. 9.2.2]{buffoni2003} on global bifurcation in cones. Therefore, we define for the square case
        \begin{equation*}
            \Kcal_\square \coloneqq \set{v \in \Xcal}{\partial_xv,\;\partial_yv\geq 0
				\quad\textrm{in}\quad D_\square \coloneqq \sprlr{-\pi/k_0,0}\times\sprlr{-\pi/k_0,0}}
        \end{equation*}
        and for the hexagonal case 
        \begin{equation*}
            \Kcal_\hexagon \coloneqq \set{v \in \Xcal}{\partial_xv,\;\sqrt{3}\partial_xv+\partial_yv\geq 0
				\quad\textrm{in}\quad  D_\hexagon},
        \end{equation*}
        where $D_\hexagon$ is the triangular subset of the hexagon defined as
        \begin{equation*}
            D_\hexagon \coloneqq \set{(x,y) \in \hexagon}{\pi < \theta < \tfrac{7}{6}\pi,\textrm{ where}\;x + \ii y = r \ee^{\ii\theta}},
        \end{equation*}
        see \cref{fig:slicemono}. Note that both $\Kcal_\square$ and $\Kcal_\hexagon$ are closed cones in the respective function space $\Xcal$.
		
		Again, we restrict the proof to the case $\Omega=\square$, as the other case is analogous.
        To apply \cite[Thm. 9.2.2]{buffoni2003}, we verify the following conditions:
		\begin{enumerate}[label=(\alph*)]
			\item\label{cone cond a} 
			$\Kcal_\square$ is a cone in $\Xcal$;
			\item\label{cone cond b} 
			for some $\varepsilon >0$ the local bifurcation branch satisfies $v(s)\in\Kcal_\square$ for $0\leq s<\varepsilon$;
			\item\label{cone cond c} 
			if $\xi\in\ker\partial_vF(0,M)\cap\Kcal_\square$ for some $M\in\R$, then $\xi=\alpha\xi_0$ for some $\alpha\geq0$ and $M=M^\ast(k_0)$;
			\item\label{cone cond d}
			$\set{(v,M)\in \Scal}{v\nequiv 0} \cap (\Kcal_\square\times\R)$ is open in $\Scal$.
		\end{enumerate}
		Recall that $\xi_0$ is defined in \eqref{kernel element squ} and $\Scal$ is defined in \ref{prop global fredholm} in the proof of \cref{thm glob bif}. Since condition \ref{cone cond a} holds by definition of $\Kcal_\square$, it remains to check the conditions \ref{cone cond b}--\ref{cone cond d}.
		
		\noindent\textbf{Step 1:} Condition \ref{cone cond c}.
		Let $\ker\partial_vF(0,M)\cap\Kcal_\square\neq\{0\}$ and $\xi\in\ker\partial_vF(0,M)\cap\Kcal_\square\setminus\{0\}$.
		In the proof of \cref{thm glob bif} we have computed $\partial_v F(v,M)$ and setting $v\equiv0$ yields 
		\begin{equation*}
			L_{0,M}\xi
			\coloneqq
			\partial_v F(0,M)\xi
			=
			\Delta \xi + \sprlr{\frac{M}{4}-g} \xi,
		\end{equation*}
		as $\xi$ has vanishing mean.
		The Fourier symbol is
		\begin{equation*}
			\hat{L}_{0,M}
			(\bfk)
			= 
			-\abslr{\bfk}^2+\frac{M}{4}-g
			,\qquad 
			\bfk
			\in\Gamma.
		\end{equation*}
		Note that $L_{0,M}$ acts diagonally on Fourier modes, and upon rewriting $\xi$ as a cosine series, we conclude by $\xi\in\Kcal_\square$ that $\xi=\alpha\xi_0$ for some $\alpha\geq0$. Therefore, $L_{0,M}\xi = 0$ if and only if $\hat{L}_{0,M}(\bfk)=0$ for $\bfk \in \Lambda_\square$, where $\Lambda_\square$ is defined in \Cref{eq:def-Lambda}. We already know that the latter is true if and only if $M=M^\ast(k_0)$.
		
		\noindent\textbf{Step 2:} Openness condition \ref{cone cond d}.
		We prove that $\set{(v,M)\in \Scal}{v\nequiv 0} \cap (\Kcal_\square\times\Rbb)$ is open in $\Scal$.
		Let $(v,M)\in\Scal \cap (\Kcal_\square\times\Rbb)$.
		As $F(v,M)=0$ and $v$ is smooth, we may take take the gradient of the equation, which eliminates the nonlocal term. 
		This yields that $v$ solves
		\begin{equation*}
			\nabla\Delta v-g\nabla v
			+
			M\frac{1}{(1+v)(2+v)^2}\nabla v=0.
		\end{equation*}
		In particular, $\partial_xv$ and $\partial_yv$ solve the linear equation
		\begin{equation*}
			\Delta u-g u
			+
			M\frac{1}{(1+v)(2+v)^2} u = 0.
		\end{equation*}
		A refined version of Hopf's Lemma \cite[Sec.~9.5.2, Lem.~1 (ii)]{evans2010a} implies either $\partial_xv\equiv 0$ or $\partial_xv>0$ and simultaneously either $\partial_yv\equiv 0$ or $\partial_yv>0$.
		By $\square$-symmetry, it is not possible that exactly one partial derivative vanishes identically.
		So either $v\equiv0$, or $(v,M)$ is an interior point of $\set{(v,M)\in \Scal}{v\nequiv 0} \cap (\Kcal_\square\times\Rbb)$ in $\Scal$.
		
		\noindent\textbf{Step 3:} Condition \ref{cone cond b}. Recalling the expansion \eqref{expansions squ} of the local bifurcation branch, we prove that for some $\varepsilon >0$ 
		\begin{equation*}
			v(s) = s(\cos(k_0x)+\cos(k_0y))/\normlr{\xi_0}_\Xcal+\tau(s)\in\Kcal_\square\quad\text{for all}\;0\leq s<\varepsilon,
		\end{equation*}
		where $\tau=\Ocal(\abs{s}^2)$ in $\Xcal$.
		Therefore, it is enough to prove that
		\begin{align*}
			\partial_xv(s)=-sk_0 \sin(k_0x)/\normlr{\xi_0}_\Xcal + \partial_x\tau(s) \geq& 0,\\
			\partial_yv(s)=-sk_0 \sin(k_0y)/\normlr{\xi_0}_\Xcal + \partial_y\tau(s) \geq& 0
		\end{align*}
        for all $(x,y) \in D_\square$ and $0 \leq s < \varepsilon$.
		By symmetry, it suffices to prove the first inequality.
		Fix $y\in[-\pi/k_0,0]$ and note that by compactness we can choose $\varepsilon$ uniformly in $y$.
		If $y\in\{-\pi/k_0,0\}$ there is nothing to prove, so assume $y\in(-\pi/k_0,0)$.
		
		We consider $\Tilde{v}(s)\coloneqq \normlr{\xi_0}_\Xcal v(s)/s$ and $\Tilde{\tau}(s)\coloneqq \normlr{\xi_0}_\Xcal\tau(s)/s$ ($=\Ocal(\abs{s})$ in $\Xcal$), and we prove the equivalent inequality
		\begin{equation}
			\label{needed ineq global}
			\partial_x\Tilde{v}(s)=
			-k_0 \sin(k_0x) + \partial_x\Tilde{\tau}(s) \geq 0\quad\text{for all}\;
			x\in(-\pi/k_0,0)
			\;\text{and}\;
			0\leq s<\varepsilon.
		\end{equation}
        
		It suffices to prove that there is a $\kappa>0$ such that
		\begin{equation}
			\label{claim ineq global}
			\partial_{xx}\Tilde{v}(s)\vert_{x=-\pi/k_0}\geq\kappa,
			\qquad
			\partial_{xx}\Tilde{v}(s)\vert_{x=0}\leq-\kappa 
			\qquad\text{for all}\;
			0\leq s<\varepsilon.
		\end{equation}
		Indeed, as $v$ is $\Omega$-symmetric and $\Omega$-periodic, we know that $\partial_{x}\Tilde{v}(s)\vert_{x=-\pi/k_0}=\partial_{x}\Tilde{v}(s)\vert_{x=0}=0$.
		By elliptic regularity $\Tilde{v}(s)$ is smooth and thus \eqref{claim ineq global} implies the existence of $\delta>0$ such that
		\begin{equation*}
			\partial_x\Tilde{v}(s)\geq 0,
			\qquad
			\text{on}\;
			[-\pi/k_0,-\pi/k_0+\delta]
			\cup
			[-\delta,0]
			\quad
			\text{for all}\;
			0\leq s<\varepsilon
		\end{equation*}
		and
		\begin{equation*}
			\partial_{x}\Tilde{v}(s)\vert_{x=-\pi/k_0+\delta}\geq\frac{1}{4}\delta\kappa>0,
			\qquad
			\partial_{x}\Tilde{v}(s)\vert_{x=-\delta}\geq\frac{1}{4}\delta\kappa>0 
			\qquad\text{for all}\;
			0\leq s<\varepsilon.
		\end{equation*}
		To see that $\delta$ is $s$-independent, it suffices to expand $\Tilde{v}(s)$ and use \cref{lemma higher tau} along with the Sobolev embedding theorem.
		On $[-\pi/k_0+\delta,\delta]$ we can find a bound $-k_0 \sin(k_0x)\geq c>0$.
		Again, by \cref{lemma higher tau} and by the Sobolev embedding theorem, it follows that $\Tilde{\tau}(s)=\Ocal(|s|)$ in $\Crm^1_\per(\Omega)$ and, upon possibly choosing a smaller $\varepsilon>0$, we obtain \eqref{needed ineq global}.
		
		It therefore remains to prove \eqref{claim ineq global}.
		Note that $\partial_{xx}\Tilde{v}(s)=-k_0^2\cos(k_0x)+\partial_{xx}\Tilde{\tau}(s)$.
		As $\abs{\cos(k_0x)}=1$ for $x\in\{-\pi/k_0,0\}$, it suffices to show that $\partial_{xx}\Tilde{\tau}(s)=\Ocal(\abs{s})$ in $\Crm_\per^0(\Omega)$.
		This is true by \cref{lemma higher tau} combined with the Sobolev embedding theorem.
	\end{proof}
	
	\begin{remark}
		\label{remark nodal}
		The result \cite[Thm. 9.2.2]{buffoni2003} also ensures that $v(s)\in\Kcal_{\Omega}\setminus\{0\}$, $\Omega\in \{\square,\hexagon\}$ for $s>0$.
		As in the proof of condition \ref{cone cond d}, a refined version of Hopf's lemma implies that $v(s)$ is in the interior of $\Kcal_\Omega$, that is, the monotonicity properties in $D_\Omega$ are strict.
		Therefore, for all $s>0$ the function $v(s)$ attains its only global maximum on $\overline{\Omega}$ at $(x,y)=(0,0)$, while its global minimum on $\overline{\Omega}$ is attained at the corners of the fundamental domain $\overline{\Omega}$.
		The $v(s)$ feature a nodal pattern of minima and maxima, which is invariant along the bifurcation curve.
		This is called a nodal property.
	\end{remark}

	It turns out that alternative \ref{blow up} in \cref{thm glob bif} cannot be caused by a mere blow-up of derivatives.
	In particular, we can exclude corners and cusps as long as $v$ and $M$ remain bounded in $\Lrm^2_\mathrm{per} \times \R$, and $v$ remains bounded away from $-1$.
	
	\begin{proposition}
    \label{prop no derivative blowup}
		We can replace
		alternative \ref{blow up} in \cref{thm glob bif} with
		\begin{enumerate}[label=(\roman*)'']
			\item
			$\normlr{\sprlr{v(s),M(s)}}_{\Lrm_\per^2(\Omega)\times\Rbb}\longrightarrow\infty$ as $s\rightarrow\pm\infty$.
		\end{enumerate}
		
		In particular, a solution $\sprlr{v,M}$ admits the bound		
		\begin{equation}\label{eq:reg-bound}
			\norm{v}_\Xcal
			\lesssim
			g\norm{v}_{\Lrm_\per^2(\Omega)}
			+
			M\sprlr{
			1
			+
			\normlr{\log\sprlr{1+v}}_{\Lrm_\per^2(\Omega)}
			+
			\normlr{1+v}_{\Lrm_\per^2(\Omega)}
			}.
		\end{equation}
	\end{proposition}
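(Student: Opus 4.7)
The plan is to establish the regularity bound \eqref{eq:reg-bound} first; once it is available, the refinement of alternative \ref{blow up} follows by a short contradiction argument.

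For \eqref{eq:reg-bound}, I would rearrange the bifurcation equation $F(v,M)=0$ as
\begin{equation*}
    \Delta v = g v - M\left(\frac{1}{2+v} + \log\left(\frac{1+v}{2+v}\right)\right) + M K(v),
\end{equation*}
and estimate each term in $\Lrm^2_\per(\Omega)$. The constraint $v > -1$ gives the elementary pointwise bounds $0 < \tfrac{1}{2+v} \leq 1$ and, applying $\log(x) \leq x - 1$ to $x = 2+v > 1$, also $0 \leq \log(2+v) \leq 1+v$. Consequently the bracketed nonlinear term is dominated pointwise by $1 + |\log(1+v)| + |1+v|$, whose $\Lrm^2_\per(\Omega)$-norm is exactly the quantity appearing on the right-hand side of \eqref{eq:reg-bound}; the nonlocal constant $K(v)$ from \eqref{eq:K} is controlled by the same quantity via Cauchy--Schwarz. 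Collecting these estimates yields a bound on $\|\Delta v\|_{\Lrm^2_\per(\Omega)}$, and since $v \in \Xcal$ has vanishing mean and is $\Omega$-periodic, elliptic regularity on the torus (equivalently a Fourier multiplier estimate combined with Poincar\'e's inequality) gives $\|v\|_\Xcal \lesssim \|\Delta v\|_{\Lrm^2_\per(\Omega)}$, which is \eqref{eq:reg-bound}.

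For the refinement of the alternative, I would argue by contradiction. Assume \ref{approach boundary} and \ref{loop} both fail; then \cref{thm glob bif} forces \ref{blow up}, so $\|(v(s), M(s))\|_{\Xcal \times \R}$ is unbounded. If $|M(s)|$ is unbounded, the refined conclusion is immediate, so assume $M(s)$ stays bounded. Failure of \ref{approach boundary} provides $\eta > 0$ and $s_0$ such that $\dist\bigl((v(s), M(s)), \partial\Ucal \times \R\bigr) \geq \eta$ in $\Xcal \times \R$ for $|s| \geq s_0$, and the two-dimensional embedding $\Xcal = \Hrm^2_\per(\Omega) \hookrightarrow \Crm^0_\per(\Omega)$ converts this into a uniform pointwise lower bound $v(s) \geq -1 + \delta$ for some $\delta > 0$. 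If additionally $\|v(s)\|_{\Lrm^2_\per(\Omega)}$ were bounded, then $\|1+v(s)\|_{\Lrm^2_\per(\Omega)}$ is bounded directly and $\|\log(1+v(s))\|_{\Lrm^2_\per(\Omega)}$ via the pointwise estimate $|\log(1+v)| \leq |\log\delta| + |v|$; inserting into \eqref{eq:reg-bound} gives $\|v(s)\|_\Xcal$ uniformly bounded, contradicting \ref{blow up}. Hence $\|v(s)\|_{\Lrm^2_\per(\Omega)}$ must be unbounded, which is the refined alternative.

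The main technical point is the passage from $\Xcal$-distance to $\partial\Ucal$ to a pointwise lower bound on $v$; in two dimensions the embedding $\Hrm^2 \hookrightarrow \Crm^0$ makes this essentially immediate, with only mild care needed to handle the zero-mean and symmetry constraints in the definition of $\Xcal$. By the nodal property in \cref{remark nodal} the minimum of $v$ is pinned to the corners of $\Omega$, so the required nearby element of $\partial\Ucal$ can be built by subtracting a fixed $\Omega$-symmetric zero-mean bump of small amplitude peaked at those corners, making the implication fully quantitative.
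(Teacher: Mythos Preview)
Your derivation of the bound \eqref{eq:reg-bound} is essentially identical to the paper's: both rearrange $F(v,M)=0$, use the pointwise estimates $0<\tfrac{1}{2+v}\le 1$ and $0\le\log(2+v)\le 1+v$, control $K(v)$ by the same quantity, and then recover the $\Hrm^2$-norm from $\|\Delta v\|_{\Lrm^2_\per}$ via Poincar\'e and elliptic regularity. The paper's written proof in fact stops at \eqref{eq:reg-bound} and leaves the passage to the refined alternative implicit; your contradiction argument---together with the bump construction that turns failure of \ref{approach boundary} into a uniform pointwise lower bound on $v(s)$---spells this out correctly.
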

	\begin{proof}
		Recall that a solution $v\in\Xcal$ satisfies
		\begin{equation*}
			\Delta v=gv
			+
			M\sprlr{\frac{1}{2+v}+\log\sprlr{\frac{1+v}{2+v}}}
			-MK(v),
		\end{equation*}
		where the constant $K(v)$ is defined in \eqref{eq:K}.
		By the vanishing mean condition, Poincaré's inequality implies $\norm{v}_{\Lrm_\per^2(\Omega)}\lesssim\norm{\nabla v}_{\Lrm_\per^2(\Omega)}$.
		In addition, by $\Omega$-symmetry, $\norm{\nabla v}_{\Lrm_\per^2(\Omega)}\lesssim\norm{\Delta v}_{\Lrm_\per^2(\Omega)}$.
		
		As for the right-hand side, we use $v>-1$ to estimate
		\begin{align*}
			\normlr{\frac{1}{2+v}+\log\sprlr{\frac{1+v}{2+v}}}_{\Lrm_\per^2(\Omega)}
			\leq&
			\abslr{\Omega}^\frac12
			+
			\normlr{\log\sprlr{1+v}}_{\Lrm_\per^2(\Omega)}
			+
			\normlr{\log\sprlr{2+v}}_{\Lrm_\per^2(\Omega)}
			\\\leq&
			\abslr{\Omega}^\frac12
			+
			\normlr{\log\sprlr{1+v}}_{\Lrm_\per^2(\Omega)}
			+
			\normlr{1+v}_{\Lrm_\per^2(\Omega)}.
		\end{align*}
		Lastly, we estimate $K(v)$
		\begin{equation*}
			\abslr{K(v)}
			=
			\abslr{\fint_\Omega\frac{1}{2+v}+\log\sprlr{\frac{1+v}{2+v}}\de x}
			\leq
			1
			+
			\abslr{\Omega}^{-1}\sprlr{\normlr{\log\sprlr{1+v}}_{\Lrm_\per^1(\Omega)}
			+
			\normlr{1+v}_{\Lrm_\per^1(\Omega)}}.
		\end{equation*}
		As $\Omega$ has finite measure, we can bound $\Lrm_\per^1(\Omega)$-norms by $\Lrm_\per^2(\Omega)$-norms.
	\end{proof}
	
	\subsection{A conditional result on film rupture}
    In the one-dimensional case, it is possible to show that only alternative \ref{approach boundary} occurs along the global bifurcation branch. The proof in the one-dimensional case relies on a Hamiltonian structure of the spatial dynamics formulation of the bifurcation problem and thus cannot be transferred to the two-dimensional setting.

    It turns out that the main challenge in the two-dimensional case is to obtain a bound on the Marangoni number $M(s)$ along the bifurcation curve. While this remains an open problem, we prove that alternative \ref{approach boundary} in \cref{thm glob bif} has to occur under the additional assumption that $M(s)$ stays bounded. In \Cref{sec:numerics} we indeed find numerical evidence that this assumption is satisfied along the bifurcation curve: in the square case, $M(s)$ is decreasing similarly to the one-dimensional case, see \Cref{fig:num bif diag squ}. For hexagonal patterns, $M(s)$ is decreasing along the branch of down-hexagons. Along the branch of up-hexagons, there is a fold point, where $M(s)$ attains its maximum. Afterwards, $M(s)$ is also decreasing, see \Cref{fig:num bif diag hex}.
    	
	\begin{proposition}
		\label{prop cond rupture}
		If $M(s)$ is bounded along the global bifurcation branch obtained in \cref{thm glob bif}, then the solutions on this branch exhibit film rupture in the sense that
		\begin{equation*}
			\inf_{s\in\Rbb}\min_{x\in\Omega}v(s)=-1.
		\end{equation*}
	\end{proposition}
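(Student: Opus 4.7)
The plan is to argue by contradiction, using the trichotomy in \cref{thm glob bif}. Assume $|M(s)| \leq M_{\max}$ uniformly in $s$, and suppose for contradiction that there exists $\delta \in (0,1)$ with $v(s)(x) \geq -1+\delta$ for every $s \in \Rbb$ and $x \in \Omega$. I will show that neither \ref{blow up} nor \ref{approach boundary} can occur, which together with the exclusion of \ref{loop} by \cref{prop not loop} yields the desired contradiction.

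To rule out \ref{approach boundary}, observe that $\partial \Ucal \subset \Xcal$ consists of those $w \in \Xcal$ with $w \geq -1$ that attain the value $-1$ somewhere in $\Omega$. Since $\Omega \subset \Rbb^2$, we have the continuous embedding $\Hrm^2_\per(\Omega) \hookrightarrow \Crm^0_\per(\Omega)$, so if a subsequence $(v(s_j), M(s_j))$ approaches $\partial \Ucal \times \Rbb$ in $\Xcal \times \Rbb$, then $\min_\Omega v(s_j) \to -1$, contradicting the standing assumption $\min_\Omega v(s) \geq -1 + \delta$.

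To rule out \ref{blow up}, \cref{prop no derivative blowup} allows us to replace it with $\norm{(v(s), M(s))}_{\Lrm^2_\per(\Omega) \times \Rbb} \to \infty$; since $|M(s)|$ is bounded, this reduces to $\norm{v(s)}_{\Lrm^2_\per(\Omega)} \to \infty$. I will derive a uniform $\Lrm^\infty$-bound on $v(s)$ via a maximum-principle argument applied to \eqref{eq:integrated-stationary-problem}. Since $v(s) \geq -1+\delta$, the integrand in \eqref{eq:K} is uniformly bounded, yielding a uniform bound $|K(v(s))| \leq K_0$. Evaluating \eqref{eq:integrated-stationary-problem} at an interior point $x^* \in \Omega$ at which $v(s)$ attains its maximum and using $\Delta v(s)(x^*) \leq 0$ gives
\begin{equation*}
    g \max_\Omega v(s) \leq M_{\max} \sprlr{K_0 + C_\delta},
\end{equation*}
where $C_\delta$ denotes a uniform bound for $|\tfrac{1}{2+w} + \log \tfrac{1+w}{2+w}|$ on $[-1+\delta, \infty)$. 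Combined with $\min_\Omega v(s) \geq -1+\delta$, this yields a uniform $\Lrm^\infty_\per(\Omega)$, hence $\Lrm^2_\per(\Omega)$, bound on $v(s)$, contradicting \ref{blow up}.

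The main obstacle, and the precise reason the result is only conditional, is that the above maximum-principle bound scales with $|M(s)|$. Without an a priori bound on $M(s)$ along the branch, one cannot prevent $\max_\Omega v(s)$ from diverging while $\min_\Omega v(s)$ stays bounded away from $-1$, and the argument breaks down. Strong numerical evidence for the boundedness of $M(s)$ is provided in \cref{sec:numerics}.
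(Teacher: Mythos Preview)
Your proof is correct and follows the same strategy as the paper: argue by contradiction, rule out alternative \ref{approach boundary} directly from the assumed lower bound, then derive a uniform $\Lrm^\infty$-bound on $v(s)$ via a maximum-principle argument at the maximum point combined with the uniform bound on $K(v(s))$, and invoke \cref{prop no derivative blowup} to exclude \ref{blow up}. The only cosmetic difference is that the paper uses the nodal property from \cref{remark nodal} to locate the maximum at the origin and obtain the strict inequality $\Delta v(s)(0,0)<0$, whereas you work with an arbitrary maximum point and the generic inequality $\Delta v(s)(x^*)\leq 0$; your version is slightly more elementary in this respect but leads to the same estimate.
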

	\begin{proof}
		We provide a proof by contradiction.
		Assume there exists $c>-1$ such that $\min_\Omega v(s)>c$ for all $s\in\Rbb$, which in particular implies that assumption \ref{approach boundary} does not occur.
		We prove that $\|v(s)\|_\Xcal$ allows for a bound depending on $M(s)$ and $c$. 
		Together with the condition of $M(s)$ being bounded, this rules out the alternative \ref{blow up} from \cref{thm glob bif}.
		As we have already ruled out the alternative \ref{loop} in \cref{prop not loop}, the assumption that $v(s)$ is bounded away from $-1$ is absurd.
		
		By the nodal property established in \cref{remark nodal}, we know 
		\begin{equation*}
			\max_{(x,y)\in\Omega} v(s)(x,y)=v(s)(0,0)\eqqcolon v_\mathrm{max}(s) 
			\qquad\mathrm{and}\qquad 
			\Delta v(s)(0,0) < 0. 
		\end{equation*}
		In combination with the bifurcation problem \eqref{bif problem}, this gives the inequality
		\begin{equation*}
			-g v_\mathrm{max}(s) + M\left(\frac{1}{2+v_\mathrm{max}(s)} + \log\left(\frac{1+v_\mathrm{max}(s)}{2+v_\mathrm{max}(s)}\right)\right) - M K(v(s)) > 0.
		\end{equation*}
		Hence, we obtain the estimate
		\begin{equation}
			\label{bound v by M}
			v_\mathrm{max}(s) < \frac{M(s)}{g}\left[\left(\frac{1}{2+v_\mathrm{max}(s)} + \log\left(\frac{1+v_\mathrm{max}(s)}{2+v_\mathrm{max}(s)}\right)\right) - K(v(s))\right] < -\frac{M(s) K(v(s))}{g},
		\end{equation}
		where we use that 
		\begin{equation*}
			\frac{1}{2+v_\mathrm{max}(s)} + \log\left(\frac{1+v_\mathrm{max}(s)}{2+v_\mathrm{max}(s)}\right) < 0.
		\end{equation*}
		As we assumed $\min_\Omega v(s)>c>-1$, we can check in \eqref{eq:K} that $K(v(s))$ allows for a bound depending on $c$.
		By \eqref{bound v by M}, along with the assumption that $M(s)$ is uniformly bounded, we find that $\|v(s)\|_{\Lrm^\infty}$ is bounded uniformly in $s \in \R$. 
        This additionally provides a uniform bound on $\|v(s)\|_{\Lrm_\per^2}$ because $\Omega$ has finite measure. 
        As $v(s)$ is also uniformly bounded away from $-1$ by assumption, we obtain that $\normlr{\sprlr{v(s),M(s)}}_{\Xcal\times\Rbb}$ is bounded using the regularity estimate \eqref{eq:reg-bound}.
		As laid out above, this yields a contradiction.
	\end{proof}

    \section{Numerical experiments}\label{sec:numerics}
    This section is devoted to a numerical analysis of the bifurcation problem \eqref{bif problem} for both the square and the hexagonal setups.
    The numerical experiments match our analytical results from \cref{sec loc bif,sec glob bif} and provide additional insight:
    \begin{enumerate}
        \item We detect bifurcation branches of (periodic and symmetric) square and hexagon solutions predicted by \Cref{thm loc bif squ,thm loc bif hex,thm glob bif}.
        \item All numerical continuations of nontrivial bifurcation branches break down at a solution that is close to film rupture. 
        Along these numerical continuations, the bifurcation parameter $M(s)$ remains uniformly bounded, which justifies the conditional result on film rupture in \cref{prop cond rupture}.
       \item A period-doubling secondary bifurcation occurs in the square case.
    \end{enumerate}
    
    The numerical continuation of branches of solutions to the bifurcation problem \eqref{bif problem} is conducted in \textsc{Matlab} using the \textsc{pde2path} library, see \cite{dohnal,uecker2014,uecker2021}.
    In each continuation step, the nonlinear partial differential equation \eqref{eq:integrated-stationary-problem} is solved numerically using a finite element method on a rectangular domain $\Omega^\textrm{num}$ with homogeneous Neumann boundary conditions.
    Mass conservation is enforced by introducing a Lagrange parameter $\lambda$, which replaces the nonlocal term $K(v)$. In addition, we set $g = 1$ and therefore $M^\ast=4g=4$. 

    In each experiment, first we numerically continue the trivial branch starting in $\sprlr{v,M}=\sprlr{0,M^\textrm{init}}$ for a predetermined number of steps.
    Possible bifurcation points are detected and stored by \textsc{pde2path}.
    In a second step, we choose a bifurcation point and continue the numerical branch in the direction of a kernel element. Note that since symmetry is not enforced, the kernel at the bifurcation point on the trivial branch is two-dimensional both in the square and hexagonal case. Therefore, to ensure $\Omega$-symmetry of solutions, we need to choose an $\Omega$-symmetric kernel element.
    In \cref{fig:num bif diag squ,fig:num bif diag squ20,fig:num bif diag hex}, the curves are branches of nontrivial solutions obtained numerically.

    Possible secondary bifurcation points are detected along the primary bifurcation branches, which we discuss in further detail in \cref{sec num squ,sec num hex}.
    In all numerical experiments, the continuation of nontrivial (primary or secondary) bifurcation branches breaks down at a solution close to film rupture.
    
    \subsection{Numerical results on the square lattice}
    \label{sec num squ}
    To detect branches of square pattern solutions, we choose the square domain $\Omega^\textrm{num}_\square=[-2\pi,2\pi]\times[-2\pi,2\pi]$. The domain is discretised using a criss-cross mesh with 100 discretisation points per dimension, leading to a mesh with $4\times99^2$ elements.
    The maximal step sizes in $s$ are 0.03 on the trivial branch and 0.01 on the other two branches.
    
    The domain $\Omega^\textrm{num}_\square$ corresponds to an absolute wave number $k_0=\tfrac{1}{2}$ and to better illustrate the numerical solutions we study the bifurcation point at $M^\ast(2k_0) = 4g + 4(2k_0)^2=8$, such that two periods per dimension fit into $\Omega^\textrm{num}_\square$ (cf. \cref{rema other bifurcation points}).
    Hence we start the continuation of the trivial branch at $\sprlr{v,M^\textrm{init}}=(0,7.9)$.
    
    The numerical bifurcation branch is displayed in \cref{fig:num bif diag squ}.
    In qualitative accordance with the analytic expansion \eqref{expansions squ}, a subcritical pitchfork bifurcation occurs at $(v,M)=(0,8)$.
    \begin{figure}[h!]
		\centering
        \includegraphics[width=0.7\textwidth]{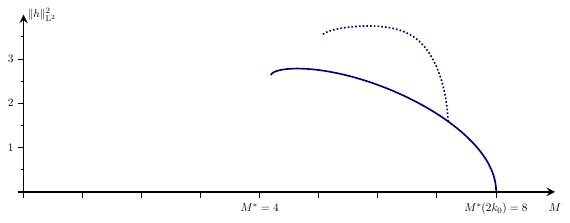}
		\caption{Numerical bifurcation diagram for square patterns on $\Omega_\square^{\textrm{num}}$. The dotted line shows the secondary bifurcation.}
		\label{fig:num bif diag squ}
	\end{figure}
    
    In Figure \ref{fig:squ bif and rup}, we plot two solutions on the bifurcation branch: First, we plot the numerical solution after five continuation steps, which is still close to the bifurcation point $(0,8)$, that is, $M(5) = 7.9217$ and $\normlr{v(5)}_{\Lrm^2} = 0.5032$.
    In addition, the solution after 206 steps is displayed.
    At this point, the solution is close to film rupture, that is, $\min_{\Omega^{\textrm{num}}} v=-0.9933$, and the numerical continuation terminates since the vector field degenerates at $v = -1$.
    \begin{figure}[H]
		\centering
		\includegraphics[height=4cm]{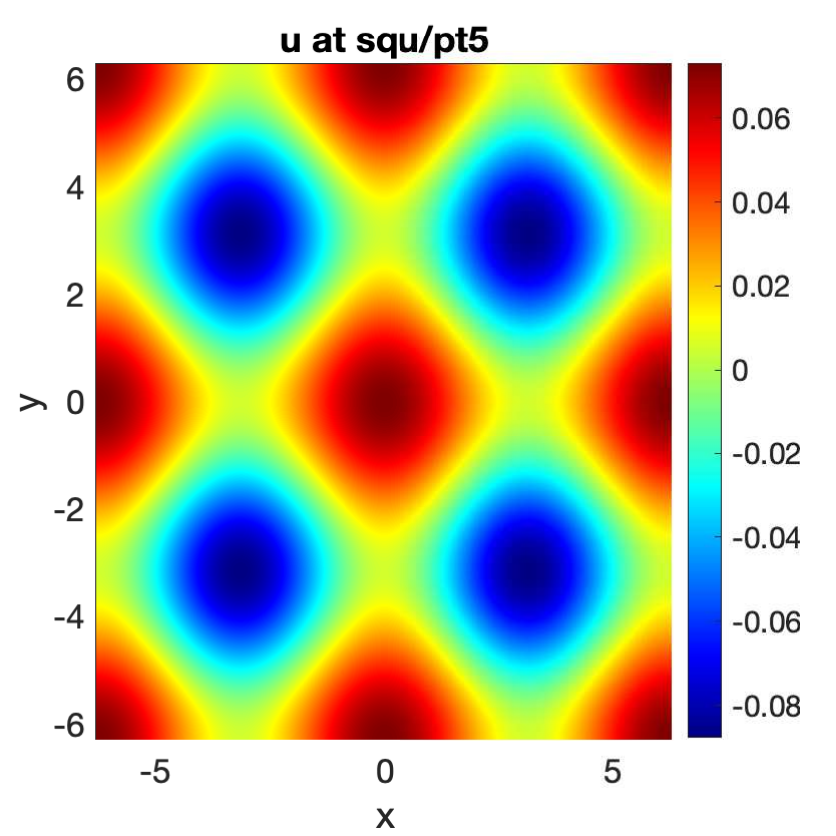}
		\hspace{1cm}
		\includegraphics[height=4cm]{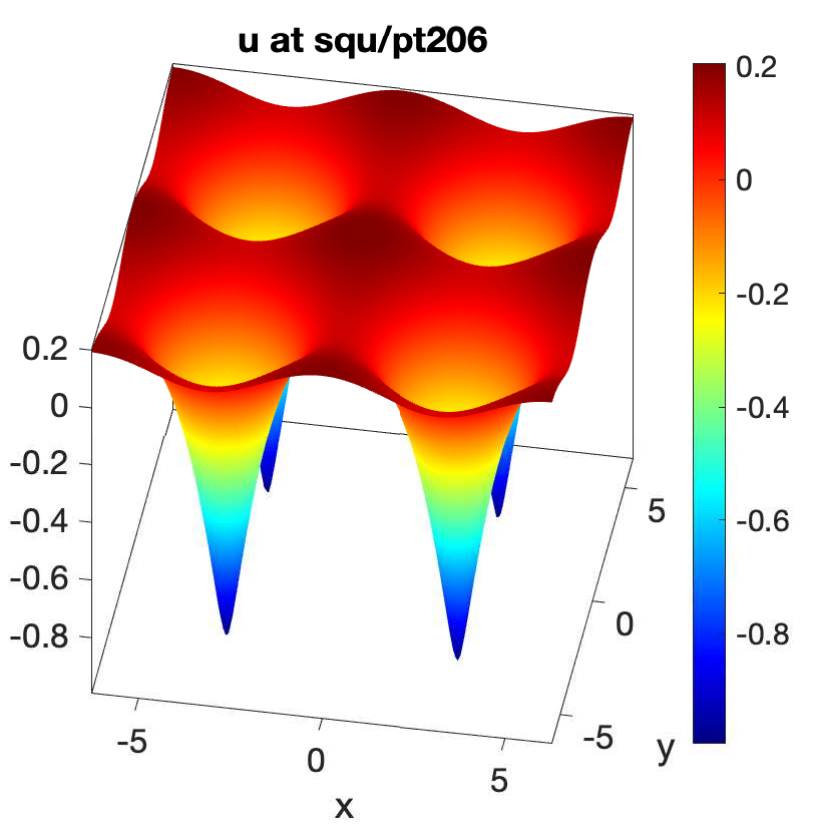}
        \caption{Square pattern solution close to primary bifurcation (left) and close to film rupture (right).}
        \label{fig:squ bif and rup}
    \end{figure}

    Along the primary bifurcation branch, five possible secondary bifurcation points are detected, but we find that only at the third one, an admissible (periodic and symmetric) solution bifurcates.
    In \cref{fig:squ sec bif}, the solution at the secondary bifurcation point is displayed, as well as after 10, 25 and 120 continuation steps on the new branch.
    The bifurcating numerical solution is symmetric and periodic with respect to a fundamental domain with double edge length. Therefore, this indicates a period-doubling bifurcation.
    Once more, the continuation of the secondary branch terminates close to film rupture ($\min_{\Omega^{\textrm{num}}} v=-0.9925$).
    \begin{figure}[h!]
		\centering
		\includegraphics[height=4cm]{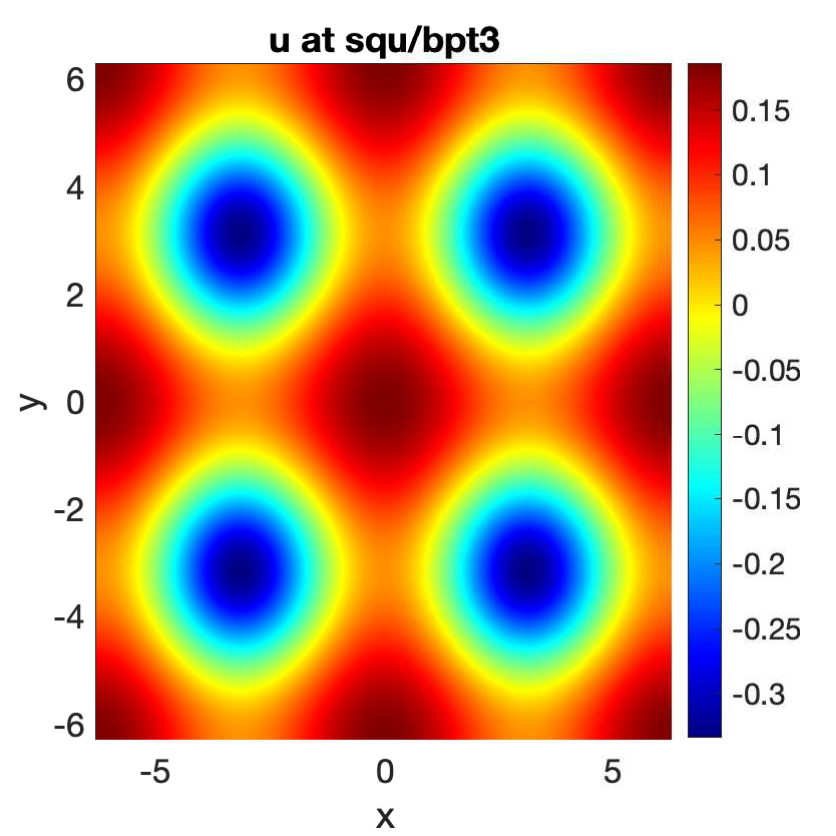}
        \hspace{0.1cm}
		\includegraphics[height=4cm]{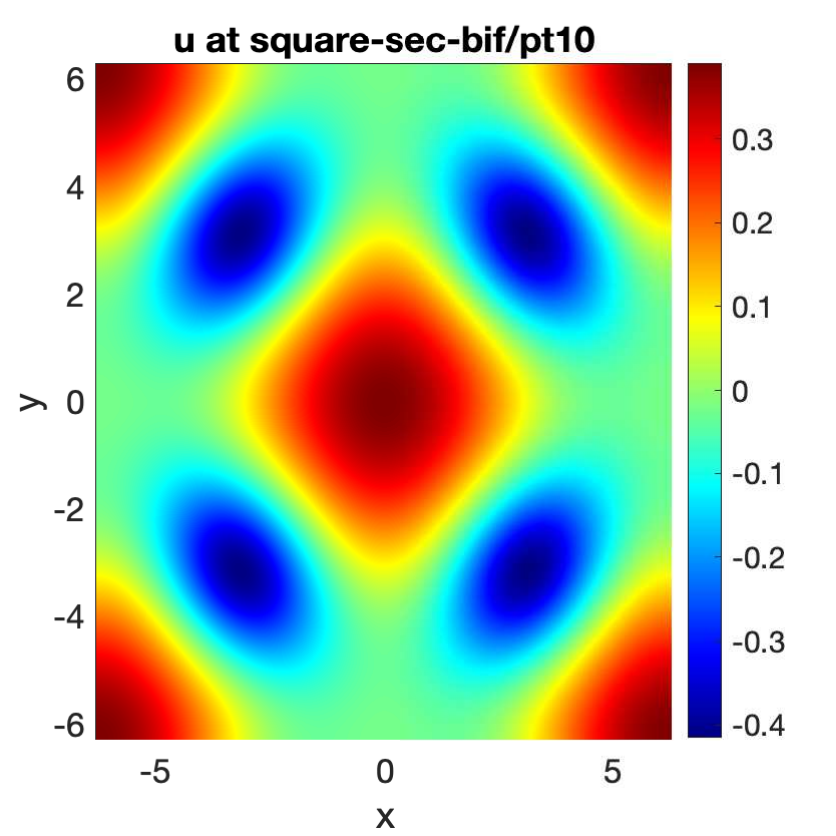}
        \hspace{0.1cm}
		\includegraphics[height=4cm]{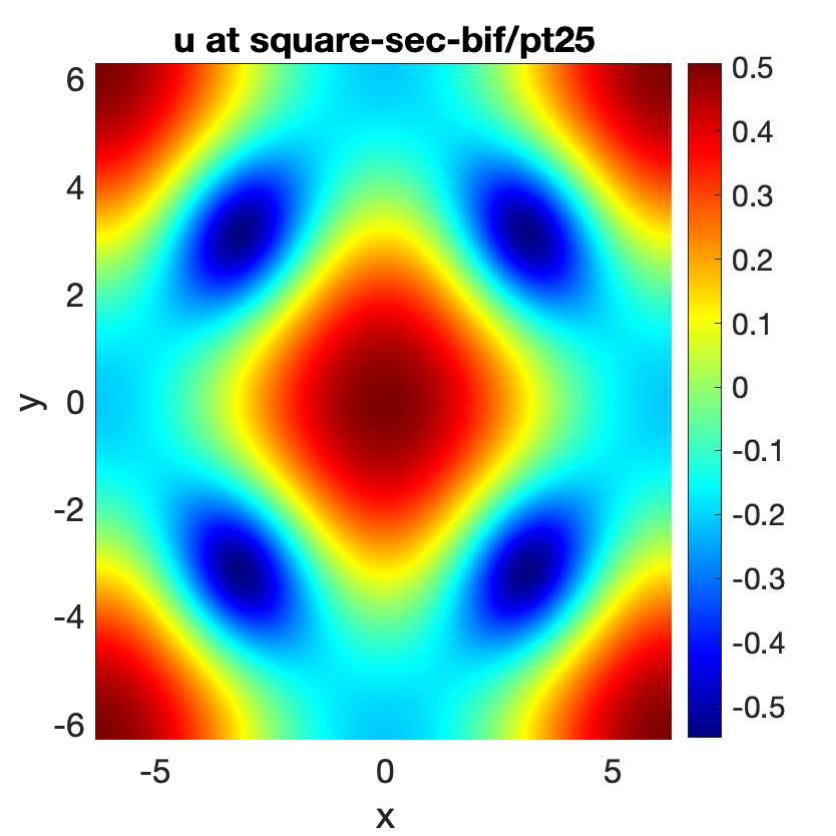}
        \hspace{0.1cm}
		\includegraphics[height=4cm]{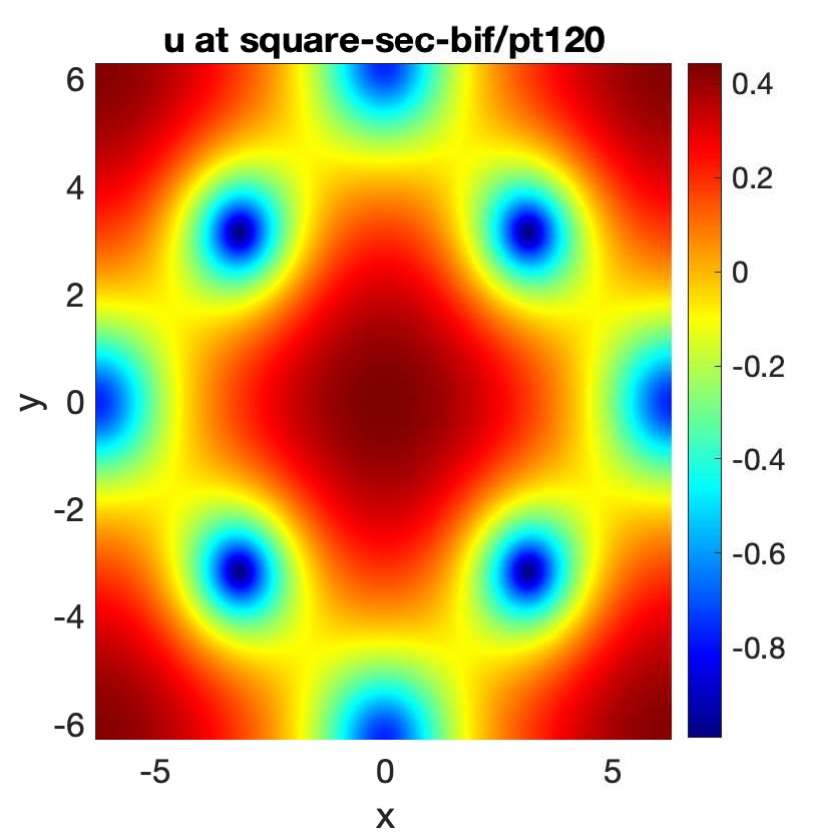}
        \caption{Square pattern solution along the secondary bifurcation branch, from the secondary bifurcation point (left) to almost film rupture (right).}
        \label{fig:squ sec bif}
	\end{figure}

    In \cref{fig:squ plots}, the minimal value, the Lagrange parameter $\lambda=K(s)$, and the $\Lrm^2$-norm of the dominant logarithmic term $\log(1+v)$ are plotted.
    The latter seems to remain bounded, which indicates a uniform $\Hrm^2$-bound for $v(s)$. In the one-dimensional case, this can be proved analytically and can be used to pass to the limit along a subsequence to obtain a weak, stationary film-rupture solution to \eqref{eq:thin-film}, see \cite[Sec.~5]{bruell2024}. However, in the two-dimensional case, we cannot obtain a uniform bound on the $\Lrm^2$-norm of the logarithm analytically. Although the numerical results in \Cref{fig:squ plots} suggest that such a bound exists, we point out that this observation might not be conclusive since the logarithm only becomes singular very close to $v = -1$. For a more detailed discussion, we refer to \Cref{sec:discussion}.
    \begin{figure}[h!]
		\centering
        \includegraphics[width=0.7\textwidth]{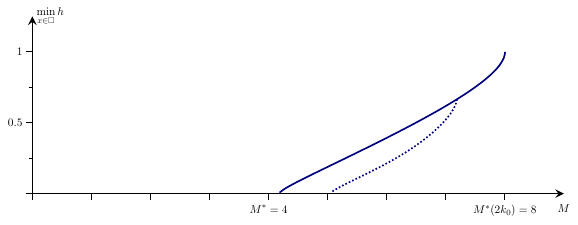}
        
        \includegraphics[width=0.7\textwidth]{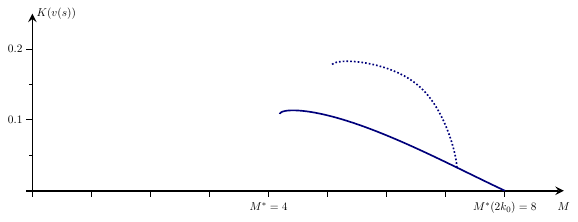}

        \includegraphics[width=0.7\textwidth]{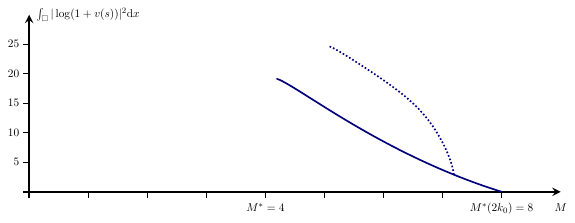}
        \caption{Plots of the minimal value, the constraint parameter $K(v(s))$, and the $\Lrm^2$-norm of the dominant logarithmic term for the primary and secondary bifurcation branch.}
        \label{fig:squ plots}
	\end{figure}

    We have also analysed the primary branch bifurcating at $(0,M^*(3k_0)) = (0,13)$. 
    For this experiment, we have set 200 discretisation points per dimension.
    However, we have not detected any admissible secondary bifurcations that respect periodicity and symmetry.
    In contrast, numerical continuation of the branch bifurcating at $(0,M^*(4k_0)) = (0,20)$ with 200 discretisation points per dimension produces two admissible secondary bifurcations.
    As expected, the primary bifurcation branch as well as the secondary branches terminate close to film rupture, see \cref{fig:num bif diag squ20,fig:squ20 sec bif}.
    By analogous reasoning as above, we compare the edge lengths of fundamental domains, which indicate that the first admissible secondary bifurcation is period-quadrupling, whereas the second one is period-doubling, see the central and right images in \cref{fig:squ20 sec bif}.
    
    \begin{figure}[h!]
		\centering
        \includegraphics[width=0.7\textwidth]{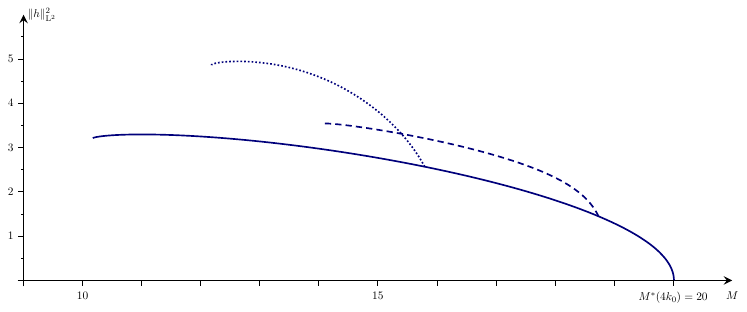}
		\caption{Numerical bifurcation diagram on $\Omega_\square^{\textrm{num}}$ for a square pattern bifurcating at $M^*(4k_0)=20$ with two admissible secondary bifurcations. The dashed line is the first admissible secondary bifurcation, the dotted line is the second admissible secondary bifurcation.}
		\label{fig:num bif diag squ20}
	\end{figure}
    
    \begin{figure}[h!]
		\centering
		\includegraphics[height=4cm]{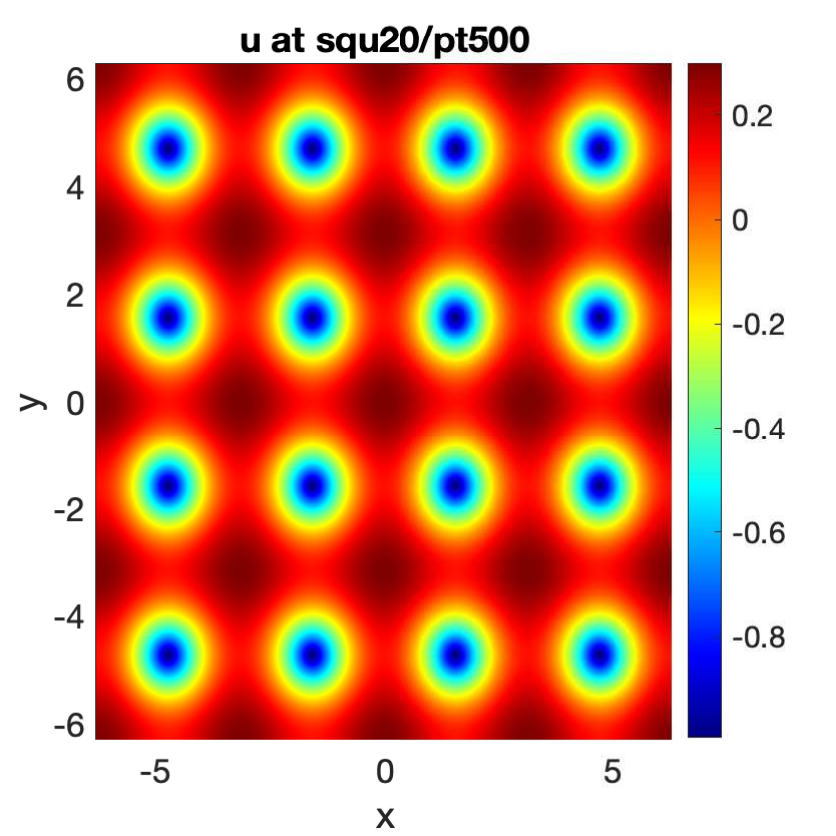}
        \hspace{0.1cm}
		\includegraphics[height=4cm]{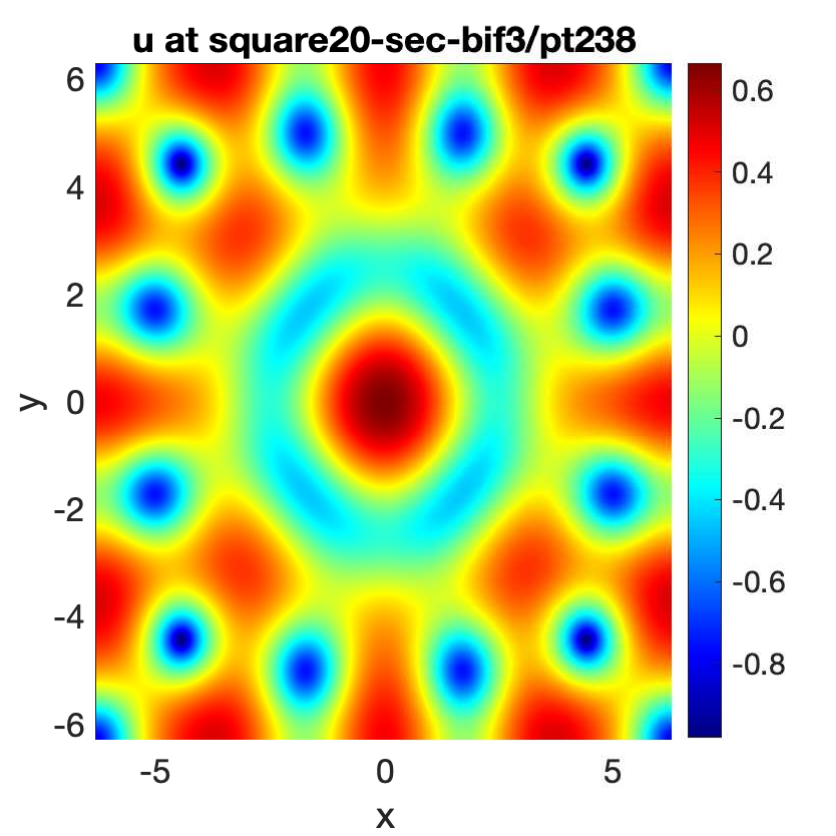}
        \hspace{0.1cm}
		\includegraphics[height=4cm]{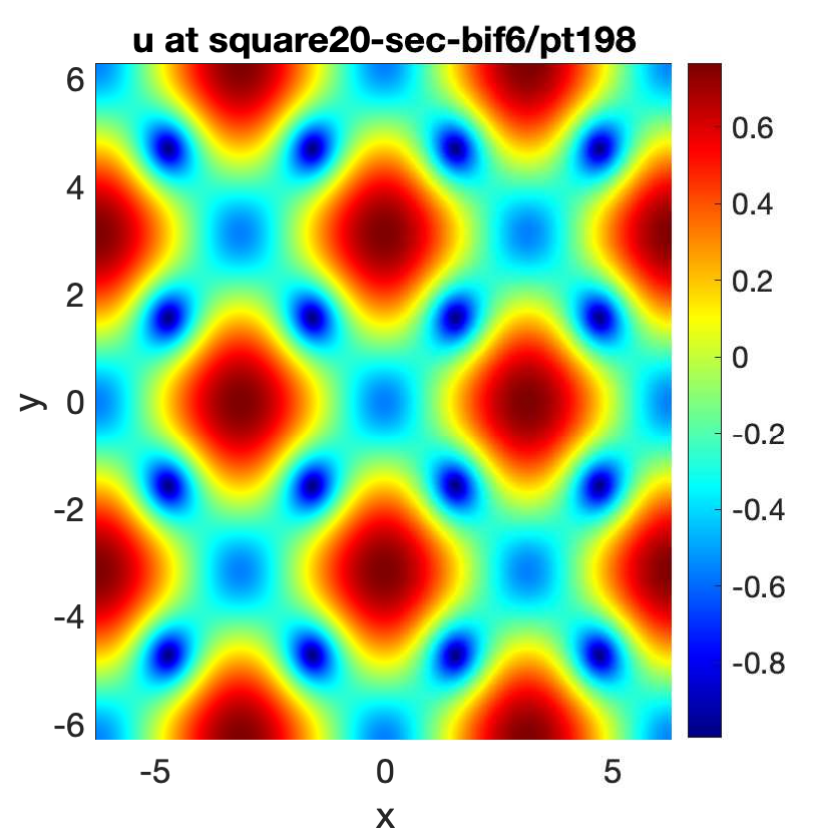}
        \caption{Square pattern solutions close to film rupture on the primary branch (left), and on the first and second admissible secondary branches (centre and right).}
        \label{fig:squ20 sec bif}
	\end{figure}
    
    \subsection{Numerical results on the hexagonal lattice}
    \label{sec num hex}
    Since \textsc{pde2path} employs a finite element method for rectangular domains, we need to choose such a domain in a way such that solutions with hexagonal periodicity and symmetry can appear on this domain, subjected to homogeneous Neumann boundary conditions.
    We choose $\Omega^\textrm{num}_\hexagon=[-2\pi,2\pi]\times[-\tfrac{2\pi}{\sqrt{3}},\tfrac{2\pi}{\sqrt{3}}]$. Again, we use 100 discretisation points per dimension and a criss-cross mesh.
    Since $\Omega^\textrm{num}_\hexagon$ is smaller than $\Omega^\textrm{num}_\square$ by the factor $\tfrac{1}{\sqrt{3}}$, this results in a much higher resolution. 
    The maximal step sizes in $s$ are again 0.03 on the trivial branch and 0.01 on the other two branches.

    In general, a higher resolution is needed to study the bifurcation of hexagon solutions.
    At lower resolution, we observe that one bifurcation point with a two-dimensional kernel splits up into two bifurcation points with one-dimensional kernels $\psi_1 = \cos(\bfk_1\cdot \x) = \cos(k_0x)$ and $\psi_2 = \cos(\bfk_2 \cdot \x) + \cos((\bfk_1 + \bfk_2) \cdot \x)$, respectively. In this situation, one cannot combine kernel elements to obtain the direction of hexagon solutions anymore.
    
    The domain $\Omega_\hexagon^{\mathrm{num}}$ corresponds to $k_0=1$ in the sense that this is the smallest rectangle that contains one fundamental domain of a hexagonal pattern of size $\tfrac{4\pi}{\sqrt{3}k_0}$ with $k_0=1$ and the hexagonal pattern respects the homogeneous Neumann boundary condition, see Figure~\ref{fig:num_domain_hex}.
    Therefore, we study the bifurcation point $M^\ast(k_0)=8$ and set the continuation of the trivial branch to start at $\sprlr{v,M^\textrm{init}}=(0,7.9)$.

    \begin{figure}[h!]
		\centering
        \includegraphics[height=4cm]{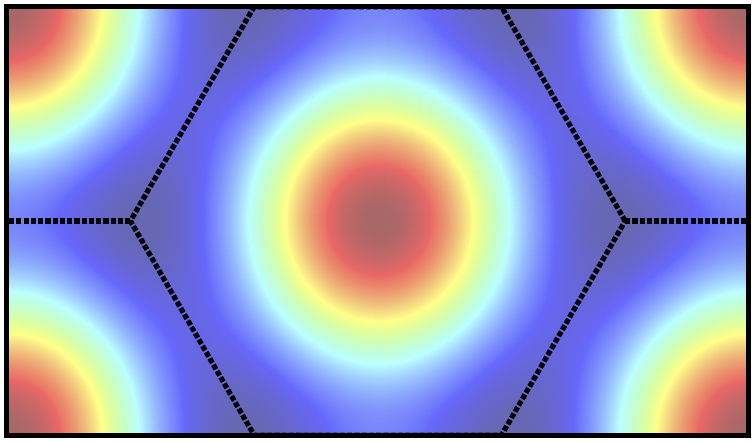}
		\caption{The domain $\Omega^\textrm{num}_\hexagon=[-2\pi,2\pi]\times[-\tfrac{2\pi}{\sqrt{3}},\tfrac{2\pi}{\sqrt{3}}]$ for the numerical analysis on the hexagonal lattice is chosen such that solutions with hexagonal periodicity and symmetry satisfy homogeneous Neumann boundary conditions.}
		\label{fig:num_domain_hex}
	\end{figure}

    The numerical bifurcation branch is displayed in \cref{fig:num bif diag hex}. Since the bifurcation is transcritical, see \Cref{thm loc bif hex}, two branches emerge.
    The branch with $M(s) - M^*(k_0) > 0$ consists of up-hexagons, and thus each maximum is surrounded by six minima.
    The other branch with $M(s) - M^*(k_0) < 0$ consists of down-hexagons, where each minimum is surrounded by six maxima, see \Cref{fig:hex bif up down}.
    \begin{figure}[h!]
		\centering
        \includegraphics[width=0.7\textwidth]{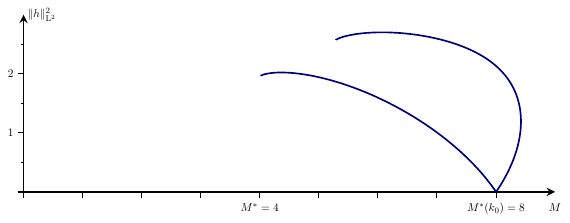}
		\caption{Numerical bifurcation diagram for a hexagonal pattern. The branch with the fold consists of up-hexagons, the other of down-hexagons.}
		\label{fig:num bif diag hex}
	\end{figure}
    \begin{figure}[h!]
		\centering
		\includegraphics[height=4cm]{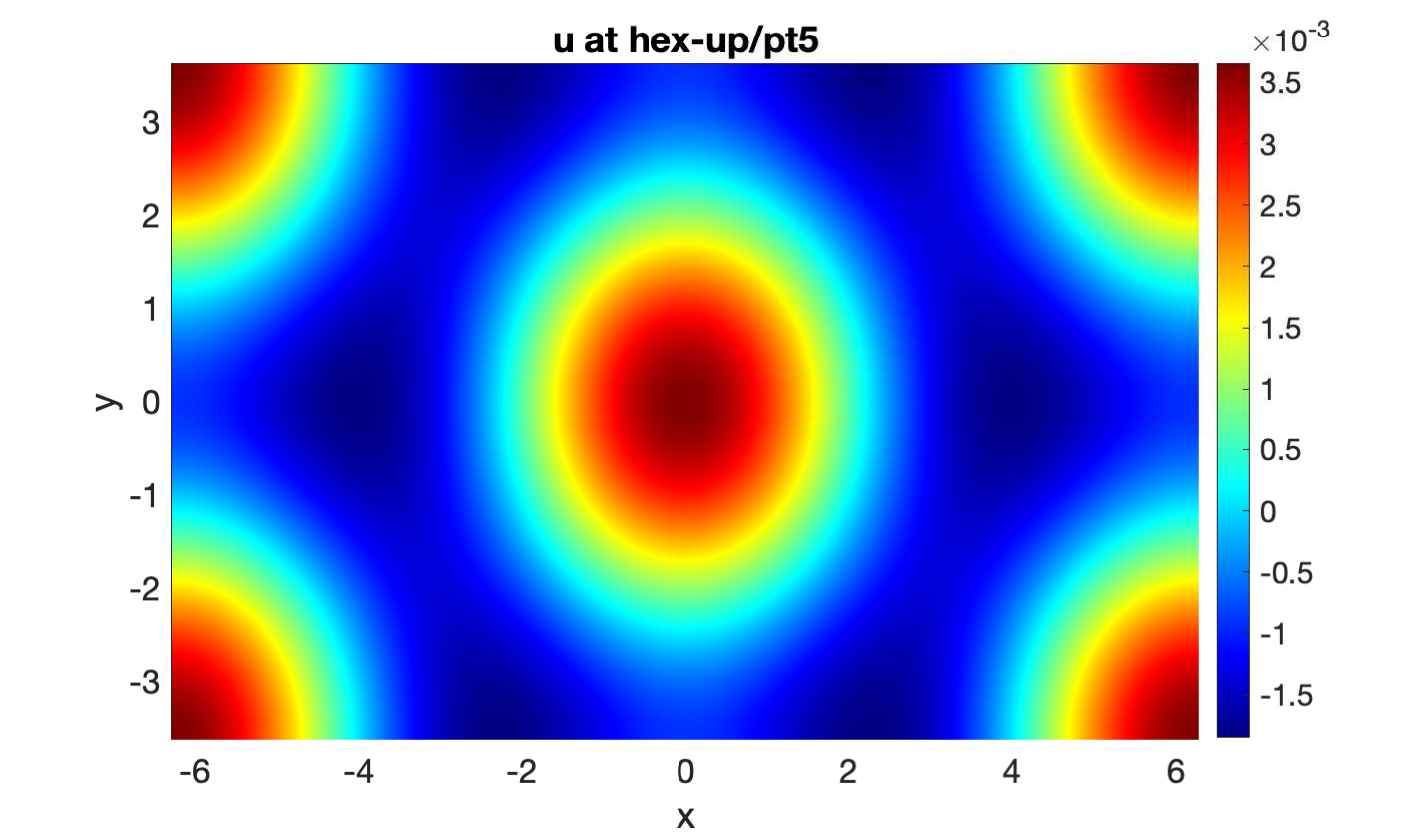}
		\hspace{1cm}
		\includegraphics[height=4cm]{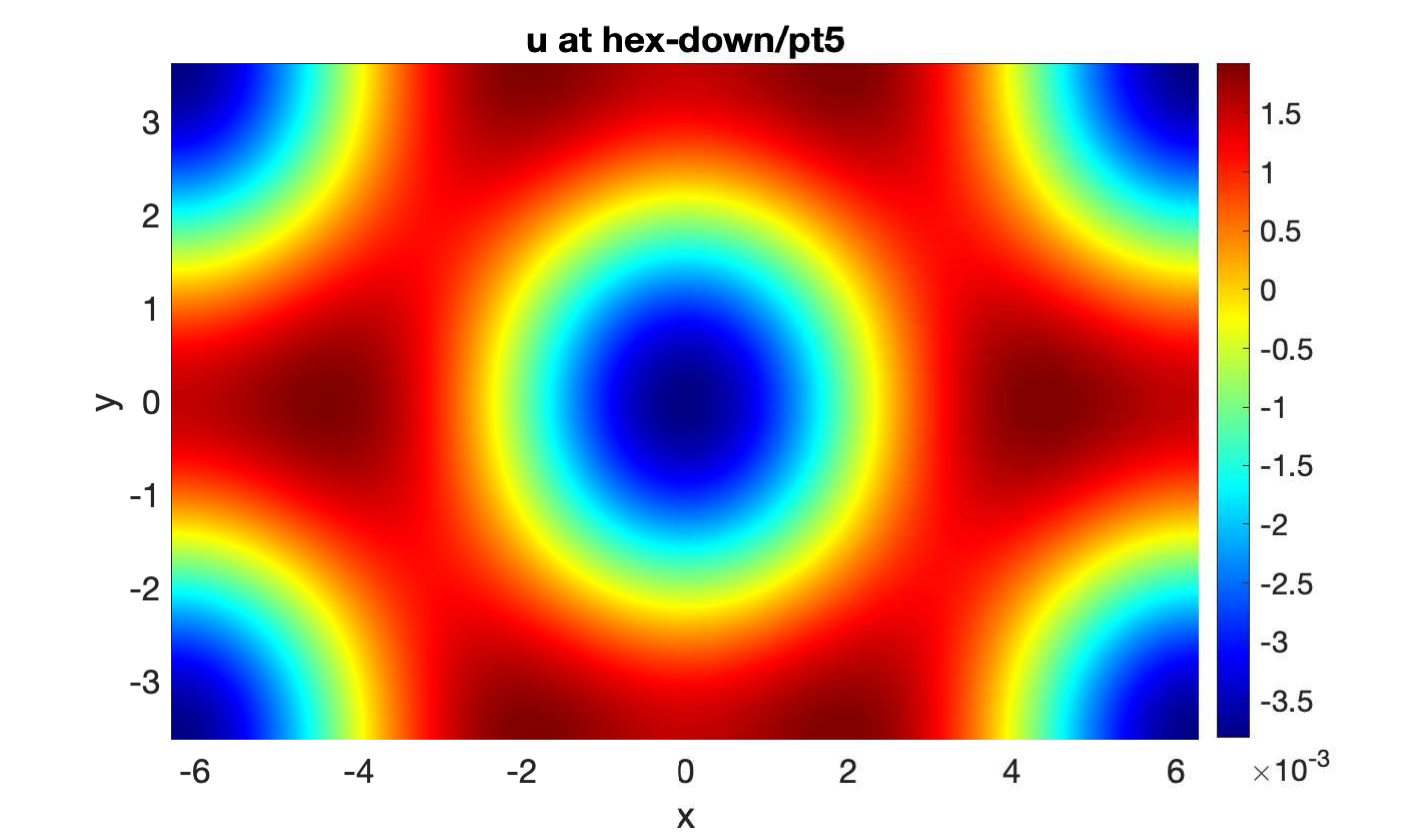}
        \caption{Up-hexagon solution (left) and down-hexagon solution (right) close to bifurcation.}
        \label{fig:hex bif up down}
    \end{figure}

    As in the square case, we plot the solutions after $5$ continuation steps in \Cref{fig:hex bif up down}.
    In addition, the up-hexagon solution after 224 steps, and the down-hexagon solution after 275 steps are displayed in \Cref{fig:hex rup}.
    These solutions are close to film rupture with $\min_{\Omega^{\textrm{num}}} v=-0.9887$ for up-hexagons and $\min_{\Omega^{\textrm{num}}} v=-0.9823$ for down-hexagons, and the numerical continuation terminates there.
    \begin{figure}[h!]
		\centering
		\includegraphics[height=4.75cm]{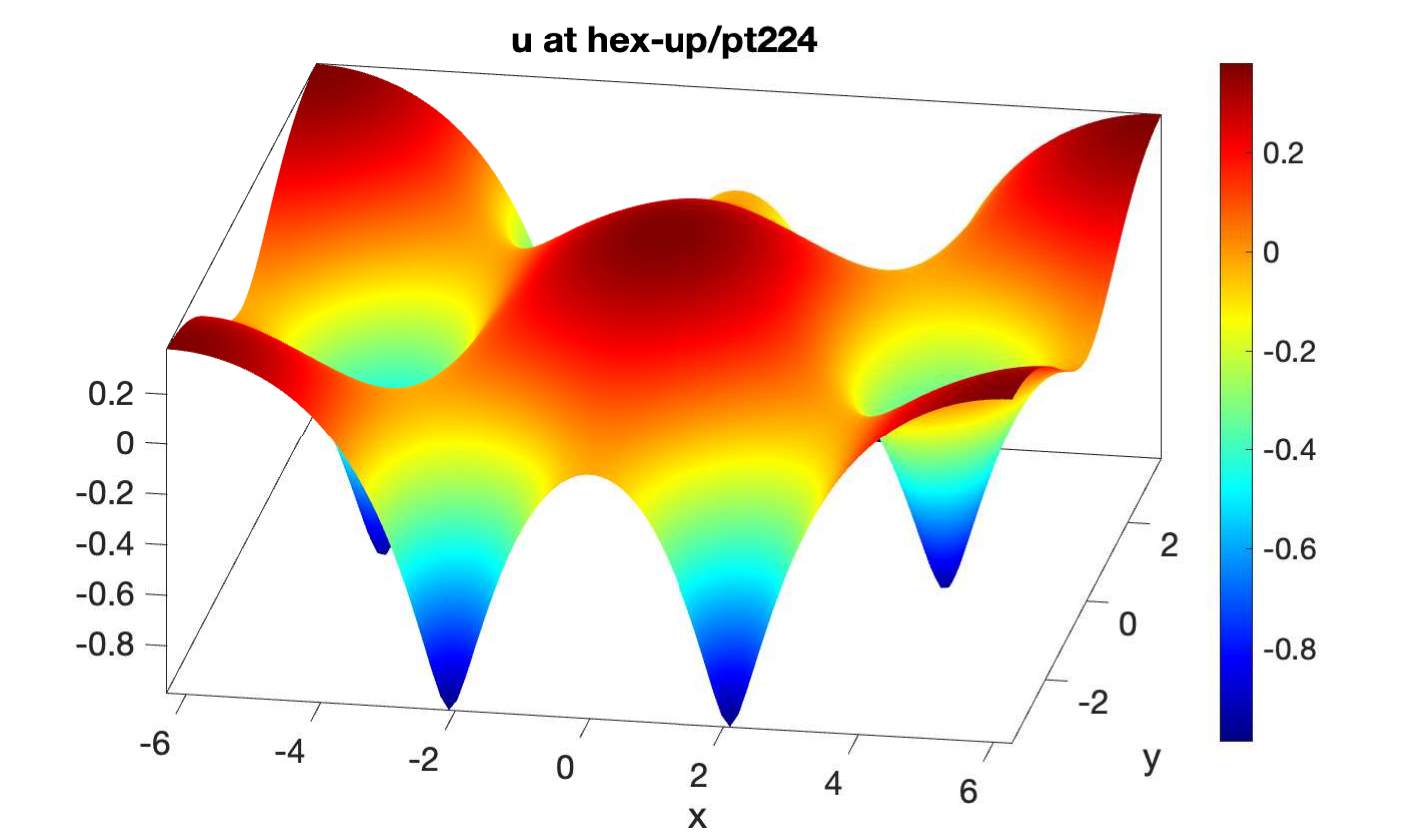}
        \hspace{0.5cm}
        \includegraphics[height=4.75cm]{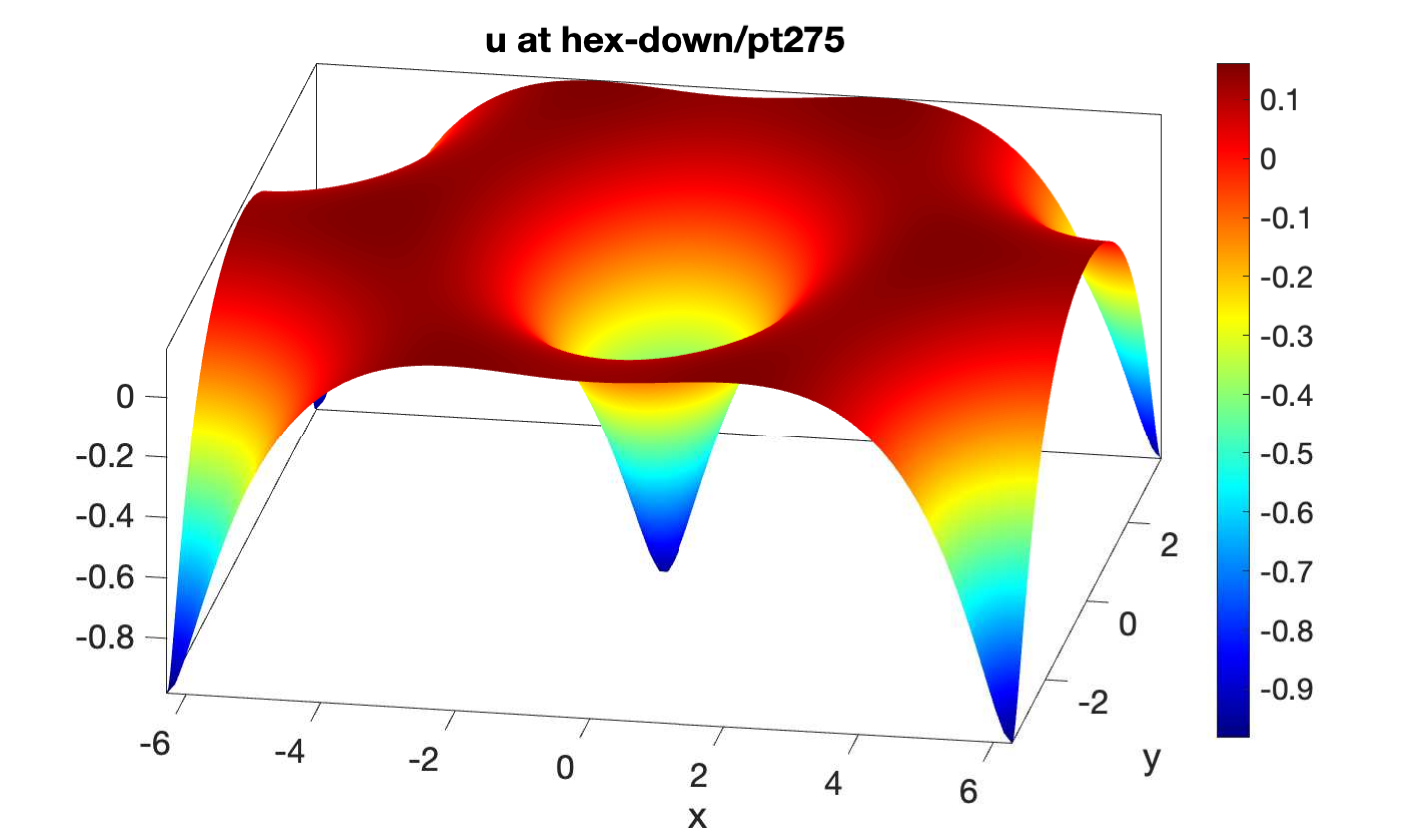}
        \caption{Up-hexagon solution (left) and down-hexagon solution (right) close to film rupture.}
        \label{fig:hex rup}
    \end{figure}

    In \cref{fig:hex plots}, the minimal value, the Lagrange parameter $\lambda=K(s)$, and the $\Lrm^2$-norm of the dominant logarithmic term $\log(1+v)$ are plotted.
    Again, note that the latter remains bounded.

    \begin{figure}[h!]
		\centering
        \includegraphics[width=0.7\textwidth]{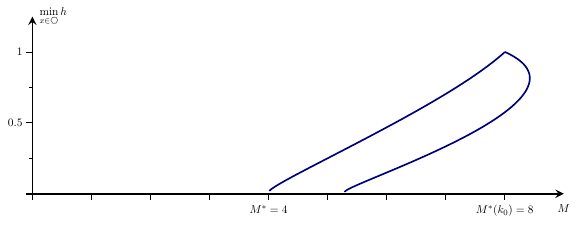}
        
        \includegraphics[width=0.7\textwidth]{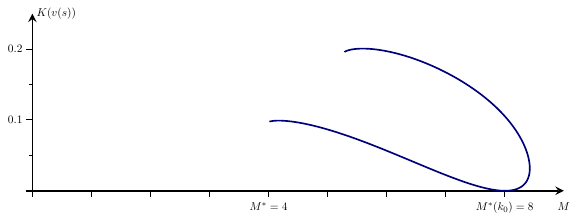}

        \includegraphics[width=0.7\textwidth]{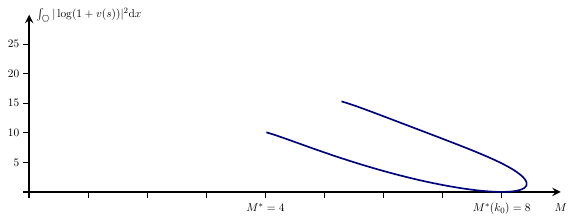}
        \caption{Plots of the minimal value, the constraint parameter $K(v(s))$, and the $\Lrm^2$-norm of the dominant logarithmic term for the up-hexagon and the down-hexagon branch. Note that the up-hexagon branch is distinguished by containing a fold.}
        \label{fig:hex plots}
	\end{figure}

    Similarly to the square case, we also performed a numerical continuation of the secondary bifurcation points found by \textsc{pde2path}. However, we did not find solutions which respect the symmetry assumptions. Therefore, it seems that no admissible secondary bifurcation occurs. Similarly, we considered the numerical continuation from the bifurcation point $(v,M^*(2k_0)) = (0,20)$ on a larger domain, which also did not yield admissible secondary bifurcations.

	\section{Discussion}\label{sec:discussion}

    In this paper, we establish the global bifurcation of stationary square and hexagonal solutions to the thermocapillary thin-film equation \eqref{eq:thin-film} from the pure conduction state using analytic global bifurcation theory. We show that the solutions exhibit a nodal structure, which prevents a return to the pure conduction state. Under the additional assumption that the Marangoni number is uniformly bounded along the bifurcation branch, we also establish that the minimal surface height tends to zero along a subsequence. Under this assumption, this shows that the thin-film equation \eqref{eq:thin-film} exhibits a family of stationary patterns which are close to film rupture.

    We conclude by giving a brief overview of related questions and discussing open problems.

    \paragraph{A rigorous bound on the Marangoni number}
    
    In the one-dimensional problem, one can establish a rigorous bound on the Marangoni number $M(s)$ along the bifurcation branch, see \cite{bruell2024}. The proof is based on a Hamiltonian structure of the spatial dynamics formulation of the one-dimensional version of \eqref{eq:integrated-stationary-problem} and a phase-plane analysis. Using this structure, one can show that for sufficiently large $M$, only strongly oscillating periodic solutions can appear. 
    
    Note that in two dimensions, an infinite-dimensional Hamiltonian structure for the spatial dynamics problem can be obtained from the elliptic problem \eqref{eq:integrated-stationary-problem} on an infinite cylinder in $x$ using periodicity in the transverse direction. However, since the phase space is infinite-dimensional, the one-dimensional argument does not generalise directly. Although the numerical continuation presented in \cref{sec:numerics} indicates that $M(s)$ is also bounded in the two-dimensional case, a rigorous proof remains an open question.

    \paragraph{Existence of a film-rupture solution}

    Using the uniform bound on the Marangoni number together with a uniform $\Hrm_\mathrm{per}^1(-\pi/k_0,\pi/k_0)$-bound, in the one-dimensional case it is possible to conclude the existence of a weak stationary periodic film-rupture solution by passing to a weak limit along a subsequence of the bifurcation branch. A key component of this argument uses the fact that $\Hrm_\per^1(-\pi/k_0,\pi/k_0)$ is embedded in $\Crm^{\frac12}(-\pi/k_0,\pi/k_0)$ and therefore convergence in $\Hrm^1_\per(-\pi/k_0,\pi/k_0)$ implies uniform convergence.
    Since this embedding does not hold in two dimensions, not even conditional on the uniform bounds on $M(s)$ and $K(v(s))$, the existence result for film-rupture solutions does not carry over into the two-dimensional case. We note that a uniform $\Hrm_\mathrm{per}^1(\Omega)$-bound can still be obtained from the weak formulation of the elliptic problem under these assumptions. Indeed, testing the equation with $1+v(s)$, it holds
    \begin{equation}\label{eq:conditional-Hrm-1-bound}
    \begin{split}
        \int_{\Omega} |\nabla v(s)|^2 \dd x & \leq \int_{\Omega} M(s) \log(1+v(s)) (1+v(s))  \dd x \\
        & \quad + \int_{\Omega} M(s) \sprlr{\frac{1}{2+v(s)} - \log(2+v(s)) + K(v(s))} (1+v(s)) \dd x 
    \end{split}
    \end{equation}
    Now, observe that by \Cref{bound v by M} and under the assumption that $M(s)$ and $K(v(s))$ are uniformly bounded, there is a uniform $\Lrm^\infty(\Omega)$-bound on $v(s)$. Hence, the right-hand side of \Cref{eq:conditional-Hrm-1-bound} is uniformly bounded. However, since $\Hrm_\per^1(\Omega)$ does not embed in $\Crm_\per^0(\Omega)$ in two dimensions, it is not possible to prove a sufficiently strong convergence result to pass to the limit in the weak formulation. In fact, a uniform bound on $v(s)$ in $\Wrm_\per^{1,p}(\Omega)$ for $p>2$ would be sufficient to establish a uniform $\Lrm_\per^q(\Omega)$-bound for $\log(1+v(s))$, $q>1$, which yields a uniform bound on $v(s)$ in $\Wrm_\per^{2,q}(\Omega)$ and the existence of a weak stationary periodic film-rupture solution similar to the one-dimensional case in \cite{bruell2024}. Although the rigorous analysis is open, we point out that the numerical analysis in \Cref{sec:numerics} suggests that $\log(1+v(s))$ remains bounded in $\Lrm_\per^2(\Omega)$, see \Cref{fig:squ plots,fig:hex plots}.

    \paragraph{Stability of large amplitude solutions}

    In \cref{sec:spec-stability,sec:stability-coperiodic}, we discuss the stability of the bifurcating periodic solutions close to the bifurcation point. In the one-dimensional case, one can even establish linear and energy instability of any positive, stationary periodic solution, see \cite{laugesen2000,laugesen2002}. Their approach is based on an idea introduced in \cite{bates1990} by transforming the linear operator into a self-adjoint operator, which then allows the characterisation of the smallest eigenvalue via Rayleigh quotients. However, their argument does not seem to generalise to the two-dimensional setting discussed in this paper. This is due to the quasilinear nature of equation \eqref{eq:thin-film} and therefore, the approach discussed in \cite{bates1990} for the multidimensional Cahn–Hilliard equation does not carry over.
	
	\section*{Acknowledgements}

    B.H.~was partially supported by the Swedish Research Council -- grant no.~2020-00440 -- and the Deutsche Forschungsgemeinschaft (DFG, German Research Foundation) -- Project-IDs 444753754 and 543917644. Additionally, B.H. acknowledges discussions about the numerical continuation in \cref{sec:numerics} with Dan J.~Hill.
    
    S.B.~has received funding from the Swedish Research Council -- grant no. 2020-00440.
    
    All three authors were supported by the Swedish Research Council under grant no. 2021-06594 while in residence at Institut Mittag-Leffler in Djursholm, Sweden during the fall semester of 2023.

    Finally, the authors also thank Erik Wahlén for interesting discussions on the global bifurcation problem.
	
	\section*{Data Availability Statement}

    The data generated for the numerical plots of the global bifurcation analysis in \cref{sec:intro,sec:numerics} is obtained using the \textsc{pde2path} library, which can be found on \cite{dohnal}. The code used to generate the corresponding data is available under \cite{bohmer2025}.

	\printbibliography
	
\end{document}